\numberwithin{equation}{section}
\newtheorem{thm}{Theorem}[section]
\newtheorem{lma}[thm]{Lemma}
\newtheorem{defn}[thm]{Definition}
\newtheorem{prop}[thm]{Proposition}
\renewcommand{\geq}{\geqslant}
\renewcommand{\leq}{\leqslant}
\renewcommand{\H}{\text{H}}
\title{The horizon problem for prevalent surfaces}
\author{K. J. Falconer and J. M. Fraser}
\begin{document}
\maketitle

\begin{abstract}
We investigate the box dimensions of the horizon of a fractal surface defined by a function $f \in C[0,1]^2 $.  In particular we show that a prevalent surface satisfies the `horizon property', namely that the box dimension of the horizon is one less than that of the surface.  Since a prevalent surface has box dimension 3, this does not give us any information about the horizon of surfaces of dimension strictly less than 3.  To examine this situation we introduce spaces of functions with surfaces of upper box dimension at most $\alpha$, for $\alpha \in [2,3)$.  In this setting the behaviour of the horizon is more subtle.  We construct a prevalent subset of these spaces where the lower box dimension of the horizon lies between the dimension of the surface minus one and 2.  We show that in the sense of prevalence these bounds are as tight as possible if the spaces are defined purely in terms of dimension. However, if we work in Lipschitz spaces, the horizon property does indeed hold for prevalent functions. Along the way, we obtain a range of properties of box dimensions of sums of functions.
\end{abstract}

\section{Introduction and main results}

In this section we introduce the horizon problem, that is the problem of relating the dimension of the horizon of a fractal surface to the dimension of the surface itself. Our main results, which are of a generic nature, depend on the notion of prevalence. 

\subsection{The horizon problem}

For $d \in \mathbb{N}$ let
\[
C[0,1]^d = \{f:[0,1]^d \to \mathbb{R}\, \big\vert \text{ $f$ is continuous} \}.
\]
The {\it graph} of a function $f \in C[0,1]^d$ is the set $G_f=\{(\mathbf{x},f(\mathbf{x})):\mathbf{x} \in [0,1]^d \} \subset [0,1]^d\times \mathbb{R}$.  We shall refer to $G_f$ as a {\it curve} when $d=1$ and as a {\it surface} when $d=2$.
\begin{defn}
Let $f \in C[0,1]^2$.  The horizon function, $H(f) \in C[0,1]$, of $f$ is defined by
\[
H(f)(x) = \sup_{y \in [0,1]} f(x,y).
\]
\end{defn}
We are interested in the relationship between the dimension of the graph of a fractal surface and the dimension of the graph of its horizon.  A `rule of thumb' is that the dimension of the horizon should be one less than the dimension of the surface.  When this is the case we will say that the surface satisfies the `horizon property'.  However, the horizon property is certainly not true in general.  Consider, for example, a surface which is very smooth except for one small region at the bottom of a depression where it has dimension 3.  This irregularity would not affect the horizon which would simply have dimension 1. Thus we can say little about the relationship between the dimensions of the surface and  its horizon for \emph{all} surfaces. Nevertheless, one can consider the `generic' situation or alternatively  one can restrict attention to specific classes of fractal surfaces. 
\\ \\
In \cite{randomhorizon,brownianhorizon} potential theoretic methods were used to find bounds for the Hausdorff dimension for the horizon of index-$\alpha$ Brownian fields.  In particular the index-$\tfrac{1}{2}$ Brownian surfaces almost surely satisfies the horizon property for Hausdorff dimension.
\\ \\
Here we consider the horizon problem for box dimension. We will say that $f \in C[0,1]^2$ satisfies the {\it horizon property (for box dimension)} if the box dimensions of $G_f$ and $G_{H(f)}$ exist and
\[
\dim_\text{B} G_{H(f)} = \dim_\text{B} G_{f} -1.
\]
We examine the horizon problem for a generic surface; of course, there are many ways of defining `generic', but since $C[0,1]^2$ is an infinite dimensional vector space it is natural to appeal to the notion of `prevalence'.

\subsection{Prevalence}

`Prevalence' provides one way of describing the generic behavior of a class of mathematical objects.  In a finite dimensional vector space Lebesgue measure provides a natural tool for deciding if a property is generic.  Namely, if the set of elements without the property is a Lebesgue null set then the property is `generic' from a measure theoretical point of view.  However, when the space in question is infinite dimensional this approach breaks down because there is no useful analogue of Lebesgue measure in the infinite dimensional setting.  The theory of prevalence has been developed to address this situation, see the excellent survey papers \cite{prevalence1,prevalence}.
We  give a brief reminder of the key definitions.

\begin{defn}
A \emph{completely metrizable topological vector space} is a vector space $X$ on which there exists a metric $d$ such that $(X,d)$ is complete and such that the vector space operations are continuous with respect to the topology induced by $d$.
\end{defn}

Some sources also require that every point in $X$ is closed in the topology induced by $d$.  This will be trivially true in all of our examples and so we omit it, see \cite{rudin} for more details.
Note that a complete normed space is a completely metrizable topological vector space with the topology induced by the norm.

\begin{defn}
Let $X$ be a completely metrizable topological vector space.  A set $F \subseteq X$ is \emph{prevalent} if the following conditions are satisfied.
\\ 
1)\, \,$F$ is a Borel set;\\
2)  There exists a Borel measure $\mu$ on $X$ and a compact set $K \subseteq X$ such that $0<\mu(K) < \infty$ and
\[
\mu\big(X \setminus (F+x)\big) = 0
\]
for all $x \in X$.
\\
The complement of a prevalent set is called a \emph{shy} set.
\end{defn}

Notice that we can assume that $\mu$ is supported by $K$ in the above definition, otherwise we could replace $\mu$ with the measure $\mu \vert_K$ which would still satisfy condition (2).
\\ \\
Since prevalence was introduced as an analogue of `Lebesgue-almost all' for infinite dimensional spaces it is perhaps not surprising that the measure $\mu$ mentioned in the above definition is often Lebesgue measure concentrated on a finite dimensional subset of $X$.

\begin{defn}
A $k$-dimensional subspace $P \subseteq X$ is called a \emph{probe} for a Borel set $F\subseteq X$ if
\[
\mathcal{L}_P \big(X \setminus (F+x)\big) = 0
\]
for all $x \in X$ where $\mathcal{L}_P$ denotes $k$-dimensional Lebesgue measure on $P$ in the natural way.  We call $F$ $k$-\emph{prevalent} if it admits a $k$-dimensional probe.
\end{defn}

The existence of a probe is clearly a sufficient condition for a set $F$ to be prevalent.

\subsection{Main results}

We will be concerned with spaces of functions defined by the box dimensions of their graphs.
Recall that the {\it lower} and {\it upper box dimensions} (or {\it box-counting dimensions}) of a bounded subset $F$ of $\mathbb{R}^d$ are given by
\begin{equation}\label{lbox}
\underline{\dim}_\text{B} F = \underline{\lim}_{\delta \to 0} \frac{\log N_\delta (F)}{-\log \delta}
\end{equation}
and
\begin{equation}\label{ubox}
\overline{\dim}_\text{B} F = \overline{\lim}_{\delta \to 0} \frac{\log N_\delta (F)}{-\log \delta}
\end{equation}
respectively, where $N_\delta (F)$ is the number of cubes in a $\delta$-mesh which intersect $F$.  If $\underline{\dim}_\text{B} F = \overline{\dim}_\text{B} F$ then we call the common value the {\it box dimension} of $F$ and denote it by $\dim_\text{B} F$.  For basic properties of box dimension see \cite{falconer}.
\\ \\
Let $d\in \mathbb{N}$  and  $\alpha \in [d,d+1]$, and define
\[
C_\alpha[0,1]^d= \{f \in C[0,1]^d : \overline{\dim}_\text{B} G_f \leq \alpha \}
\]
and
\[
D_\alpha[0,1]^d = \{f \in C_\alpha[0,1]^d : \underline{\dim}_\text{B} G_f =\overline{\dim}_\text{B} G_f = \alpha \}.
\]
There is a natural complete metric $d_{\alpha,d}$ on $C_\alpha[0,1]^d$ which we will construct in Section 3. We write $d_\infty$ to denote the metric on  $C_\alpha[0,1]^d$ defined by the norm $\| \cdot \|_\infty$.
\\ \\
The following result, that a prevalent surface has upper and lower box dimension as big as possible, is included to put our results on horizons into context.

\begin{thm}  \label{01}\hspace{1mm}
\begin{itemize}
\item[(1)] $D_{d+1}[0,1]^d$ is a 1-prevalent subset of $(C[0,1]^d,d_\infty)$;

\item[(2)] For $\alpha \in [d,d+1)$ the set $D_\alpha[0,1]^d$ is a 1-prevalent subset of $(C_\alpha[0,1]^d, d_{\alpha,d})$.
\end{itemize}
\end{thm}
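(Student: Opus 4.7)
My plan is to exhibit a one-dimensional probe built from a single Weierstrass-type function and show that translation along this probe lands in $D_\alpha$ for Lebesgue-almost every parameter value. I fix $W \in C[0,1]$ with $\dim_{\text{B}} G_W = \alpha-d+1$ (a standard Weierstrass-type construction $W(x)=\sum_n a_n\sin(b^n x)$ with amplitudes chosen to attain any target value in $[1,2]$, including the endpoint $2$ via $a_n=n^{-2}$) and set $g(x_1,\ldots,x_d)=W(x_1)$. Since $G_g$ is isometric to $G_W\times[0,1]^{d-1}$, the product rule for box dimensions gives $\dim_{\text{B}} G_g = \alpha$. Borel measurability of $D_\alpha$ is routine: for each fixed $\delta$, the map $f\mapsto N_\delta(G_f)$ is upper semicontinuous in $d_\infty$ (and hence in $d_{\alpha,d}$), so the upper and lower box dimensions of $G_f$ are Borel functions of $f$. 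With probe $P=\mathbb{R} g$, the task reduces to showing that $\{t \in \mathbb{R}: f+tg \notin D_\alpha\}$ is Lebesgue-null for every $f$.

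The core estimate, best extracted as a lemma, is the graph subadditivity bound $N_\delta(G_{u\pm v}) \leq c_d (N_\delta(G_u)+N_\delta(G_v))$ coming from $R_{u\pm v}(Q)\leq R_u(Q)+R_v(Q)$ on each $\delta$-cube $Q$, where $R_h(Q)$ denotes the oscillation of $h$ over $Q$. Applied to $(t_1-t_2)g=(f+t_1g)-(f+t_2g)$, together with the scaling relation $N_\delta(G_g)\leq 2|c|^{-1}N_\delta(G_{cg})$ (valid whenever $N_\delta(G_g)\gg\delta^{-d}$), this yields
\[
N_\delta(G_g) \;\leq\; \frac{C}{\min(1,|t_1-t_2|)} \bigl(N_\delta(G_{f+t_1g})+N_\delta(G_{f+t_2g})\bigr).
\]
Since $\overline{\dim}_{\text{B}} G_{f+tg}\leq \alpha$ always (automatic in part (1); via subadditivity in $C_\alpha$ for part (2)), taking $\liminf_{\delta\to 0}$ and using $\liminf \max \leq \max \limsup$ gives $\alpha=\underline{\dim}_{\text{B}} G_g\leq\max(\overline{\dim}_{\text{B}} G_{f+t_1g},\overline{\dim}_{\text{B}} G_{f+t_2g})$, so at most one $t$ can satisfy $\overline{\dim}_{\text{B}} G_{f+tg}<\alpha$.

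The main obstacle is the \emph{lower} box dimension: the probe-difference argument above controls only upper box dimensions. I would quantify the same inequality on a dyadic scale: fix $\epsilon>0$, $M>0$, and $\delta_n=2^{-n}$, and set
\[
S_n:=\bigl\{t\in[-M,M]:N_{\delta_n}(G_{f+tg})\leq n^{-2}\,\delta_n^{-(\alpha-\epsilon)}\bigr\}.
\]
If $t_1,t_2\in S_n$ with $|t_1-t_2|\geq r$, the displayed inequality forces $N_{\delta_n}(G_g)\leq C'(rn^2)^{-1}\delta_n^{-(\alpha-\epsilon)}$, which contradicts $N_{\delta_n}(G_g)\geq\delta_n^{-(\alpha-\epsilon/2)}$ (valid for $n$ large since $\underline{\dim}_{\text{B}} G_g=\alpha$) unless $r\leq C'' n^{-2}2^{-n\epsilon/2}$. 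Hence $S_n$ is contained in an interval of length $O(n^{-2}2^{-n\epsilon/2})$, so $\sum_n|S_n|<\infty$. By Borel--Cantelli, $|\limsup_n S_n|=0$, and for Lebesgue-a.e.\ $t\in[-M,M]$ we have $N_{\delta_n}(G_{f+tg})>n^{-2}\delta_n^{-(\alpha-\epsilon)}$ for all sufficiently large $n$, which gives $\underline{\dim}_{\text{B}} G_{f+tg}\geq\alpha-\epsilon$ (the dyadic restriction is harmless as it affects neither upper nor lower box dimensions). Intersecting over $\epsilon=1/k$ and unioning over $M\in\mathbb{N}$ completes the argument that $f+tg\in D_\alpha$ for Lebesgue-almost every $t$.
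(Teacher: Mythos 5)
Your proposal is correct, and its skeleton matches the paper's: a one-dimensional probe spanned by a single function whose graph has box dimension exactly $\alpha$ (the paper's $\Psi_\alpha$, your cylinder $g(x)=W(x_1)$ — the paper, like you, simply posits the existence of the one-dimensional function with prescribed box dimension; for the endpoint value $2$ a Takagi-type series is the cleanest rigorous choice, since the lower box dimension of your specific sine series with $a_n=n^{-2}$ is not immediate), Borel measurability via the counting functions $N_\delta$, and a proof that Lebesgue-almost every translate along the probe lies in $D_\alpha$. Where you genuinely diverge is in the key almost-everywhere lower bound. The paper proves a standalone lemma (Lemma \ref{leq}, feeding into Theorem \ref{sum}) by \emph{averaging over the parameter}: it integrates $\sum_S R_{f+\lambda g}(S)$ over $\lambda$ in a bounded set, uses $\int_E|a-\lambda b|\,d\lambda\geq\tfrac14\mathcal{L}^1(E)^2|b|$ cube by cube to get a lower bound proportional to $\sum_S R_g(S)$, deduces by a Chebyshev-type step that the exceptional set at scale $2^{-k}$ has measure $O(2^{-k\epsilon})$, and finishes with Borel--Cantelli. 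You instead run a \emph{pairwise difference} argument: if two parameters both give few boxes at scale $2^{-n}$, then the difference $(t_1-t_2)g$ has few boxes, contradicting $\underline{\dim}_\text{B}G_g=\alpha$ unless $|t_1-t_2|$ is exponentially small; hence the bad set at each dyadic scale has exponentially small diameter, and Borel--Cantelli again applies. Both mechanisms are sound; yours is more elementary (no integration in the parameter) and gives a stronger structural conclusion (the bad set lies in one short interval), while the paper's lemma is more general — it bounds $\underline{\dim}_\text{B}G_{f+\lambda g}$ below by $\max\{\underline{\dim}_\text{B}G_f,\underline{\dim}_\text{B}G_g\}$ for arbitrary $f,g$ and is reused verbatim for Theorem \ref{main} — whereas your argument exploits that the probe function has maximal lower dimension. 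Your treatment of the upper bound (at most one exceptional $t$, using $\overline{\dim}_\text{B}G_{f+tg}\leq\alpha$ from the ambient space) is essentially the paper's Lemmas \ref{upperbound}--\ref{but 2} in compressed form. Two small points to tidy: the proviso $N_\delta(G_g)\gg\delta^{-d}$ requires $\alpha>d$, but for $\alpha=d$ the statement is trivial since every graph has lower box dimension at least $d$, so $D_d[0,1]^d=C_d[0,1]^d$; and the upper semicontinuity of $f\mapsto N_\delta(G_f)$ you invoke for measurability is correct and plays the role of the paper's Lemma \ref{bor}.
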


Indeed, it  was shown in \cite{mcclure} that the graph of a prevalent function in $(C[0,1],d_\infty)$ has upper box dimension 2. Also, Theorem \ref{01} (1) was very recently obtained in \cite{lowerprevalent}, and a slight weakening of Theorem \ref{01} (1) (with `1-prevalent' replaced just by `prevalent') was given in \cite{shaw} using a completely different method without a probe. 
\\ \\
We will present our results on horizons for surfaces $G_f $ where $f \in C[0,1]^2$ though they may be extended without difficulty to `horizons' of higher dimensional graphs. Our main result is in two parts. Firstly, a prevalent surface satisfies the horizon property.  Specifically, in Theorem \ref{main} (1), we show that a prevalent surface, which according to Theorem \ref{01} (1) has box dimension 3, has  a horizon with box dimension 2.  However, this does not give us any information about the horizon dimensions of surfaces with box dimension strictly less than 3.  Thus in the second part, Theorem \ref{main} (2), we give bounds on the box dimension of the horizon of a prevalent surface in $C_\alpha[0,1]^2$. 
To formulate this, we let
\[
F_\alpha[0,1]^2 = \{f \in C_\alpha[0,1]^2 : \dim_\text{B} G_f = \alpha \text{ and } \alpha-1 \leq \underline{\dim}_\text{B} G_{H(f)} \leq  \overline{\dim}_\text{B} G_{H(f)} \leq 2\}.
\]
Thus $F_\alpha[0,1]^2$ is the set of functions in $C_\alpha[0,1]^2$ for which the box dimension exists and is as big as possible and for which the upper and lower box dimension of the horizon are bounded below by the box dimension of the original surface minus 1.  Note that taking $\alpha=3$ the box dimension of the horizon exists for all $f \in F_3 [0,1]^2$ and is equal to the box dimension of the original surface minus 1.

\begin{thm}  \label{main}\hspace{1mm}
\begin{itemize}
\item[(1)] $F_{3}[0,1]^2$ is a 1-prevalent subset of $(C[0,1]^2,d_\infty)$;

\item[(2)] For $\alpha \in [2,3)$ the set $F_\alpha[0,1]^2$ is a 1-prevalent subset of $(C_\alpha[0,1]^2, d_{\alpha,2})$.
\end{itemize}
\end{thm}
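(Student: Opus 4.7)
The plan is to build a single one-dimensional probe that simultaneously controls the dimensions of $G_{f+tg}$ and of the horizon $G_{H(f+tg)}$, exploiting the fact that if the perturbation depends only on the first variable then the supremum over $y$ passes through cleanly.

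Concretely, I would choose $W\in C[0,1]$ with $\dim_\text{B} G_W=\alpha-1$ (for part (1) a continuous function whose curve has box dimension $2$, e.g.\ a suitably tuned Weierstrass-type construction; for part (2) a Weierstrass function with exponent calibrated so that $\dim_\text{B} G_W=\alpha-1\in[1,2)$), and set $g(x,y)=W(x)$. Since $G_g$ is bi-Lipschitz equivalent to $G_W\times[0,1]$ one has $\dim_\text{B} G_g=\alpha$, so $g\in C_\alpha[0,1]^2$; my probe is $P=\mathbb{R}\cdot g$. The decisive identity is
\[
H(f+tg)(x)\;=\;\sup_{y\in[0,1]}\bigl(f(x,y)+tW(x)\bigr)\;=\;H(f)(x)+tW(x),
\]
because $tW(x)$ is independent of $y$: the horizon of the perturbed surface is simply the fixed horizon $H(f)$ displaced by the one-dimensional function $tW$.

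I then fix $f$ in the ambient space and check, for Lebesgue-a.e.\ $t$, the four inequalities defining $F_\alpha[0,1]^2$. The upper bound $\overline{\dim}_\text{B} G_{f+tg}\le\alpha$ is automatic for part (1) and for part (2) follows from the general estimate $\overline{\dim}_\text{B} G_{f_1+f_2}\le\max\{\overline{\dim}_\text{B} G_{f_1},\overline{\dim}_\text{B} G_{f_2}\}$, one of the sum-of-functions lemmas foreshadowed by the abstract. The matching lower bound $\underline{\dim}_\text{B} G_{f+tg}\ge\alpha$ is exactly the content of Theorem \ref{01} applied with the same probe $P$. The upper bound $\overline{\dim}_\text{B} G_{H(f+tg)}\le 2$ is automatic since $H(f+tg)\in C[0,1]$.

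The heart of the proof is the remaining condition $\underline{\dim}_\text{B} G_{H(f+tg)}\ge\alpha-1$, which by the identity reduces to the one-dimensional claim: for every $\phi\in C[0,1]$ and every $W\in C[0,1]$ with $\dim_\text{B} G_W=s$, the set $\{t\in\mathbb{R}:\underline{\dim}_\text{B} G_{\phi+tW}<s\}$ has Lebesgue measure zero. The guiding heuristic is that if two distinct parameters $t_1\ne t_2$ both produced $\underline{\dim}_\text{B} G_{\phi+t_iW}<s$, then the relation $(t_1-t_2)W=(\phi+t_1W)-(\phi+t_2W)$ would express $W$ as a difference of two graphs of small lower box dimension, in conflict with $\dim_\text{B} G_W=s$. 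Making this rigorous at the level of \emph{lower} (rather than merely upper) box dimension is the principal obstacle, because the elementary cover estimate $N_\delta(G_{f_1+f_2})\gtrsim N_\delta(G_{f_1})-CN_\delta(G_{f_2})$ only delivers upper-box-dimension control directly. The strategy is to feed this estimate into the sum-of-functions lemmas of the paper and argue at those scales $\delta$ on which $N_\delta(G_W)$ realises $\delta^{-s+o(1)}$: these scales force $N_\delta(G_{\phi+tW})\gtrsim\delta^{-s+o(1)}$ outside a null set of $t$, driving the lower box dimension up to $s$ almost surely and completing the verification that $P$ is a common probe for both parts of the theorem.
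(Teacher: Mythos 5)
Your proposal is essentially the paper's own argument: the same one-dimensional probe spanned by $\Psi_\alpha(x,y)=\psi_\alpha(x)$ with $\dim_\text{B}G_{\psi_\alpha}=\alpha-1$, the same identity $H(f+\lambda\Psi_\alpha)=H(f)+\lambda\psi_\alpha$, and the same key ingredient (Theorem \ref{sum}, i.e.\ the almost-every-$\lambda$ lower bound for box dimensions of sums, proved by per-scale estimates on the exceptional parameter set combined over dyadic scales via Borel--Cantelli), applied once in $C[0,1]^2$ and once in $C[0,1]$ -- and you rightly discard the ``two bad parameters'' heuristic, which fails for lower box dimension. The only ingredient of the paper's proof you do not address is the routine verification that $F_\alpha[0,1]^2$ is Borel (Lemma \ref{bor}), which the definition of prevalence requires.
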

For $\alpha<3$ we do not have precise bounds on the box dimension of the horizon of a prevalent surface.  However, the following theorem shows that our bounds are as tight as possible.

\begin{thm} \label{tight}
Let $\alpha \in [2,3)$ and let  $U_\alpha[0,1]^2$ and $L_\alpha[0,1]^2$ be defined by
\[
U_\alpha[0,1]^2 = \{f \in C_\alpha[0,1]^2 : \dim_\text{\emph{B}} G_f = \alpha \text{ and } \alpha-1 \leq \underline{\dim}_\text{\emph{B}} G_{H(f)} \leq \overline{\dim}_\text{\emph{B}} G_{H(f)} < 2\}
\]
and
\[
L_\alpha[0,1]^2 = \{f \in C_\alpha[0,1]^2 : \dim_\text{\emph{B}} G_f = \alpha \text{ and } \alpha-1 < \underline{\dim}_\text{\emph{B}} G_{H(f)} \leq \overline{\dim}_\text{\emph{B}} G_{H(f)} \leq 2\}.
\]
Then
\begin{itemize}
\item[(1)] $U_\alpha[0,1]^2$ is not a prevalent subset of $(C_\alpha[0,1]^2, d_{\alpha,2})$;

\item[(2)] $L_\alpha[0,1]^2$ is not a prevalent subset of $(C_\alpha[0,1]^2,d_{\alpha,2})$.

\end{itemize}

\end{thm}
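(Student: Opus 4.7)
The strategy for both parts is to construct a $1$-dimensional probe for the complement of the set in question, so that the complement is $1$-prevalent and the original set is shy, hence not prevalent (a set and its complement cannot both be prevalent, since the countable intersection of prevalent sets is prevalent but the empty set is not).

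For part~(2), I would try the cylinder $g_0(x,y) = h(x)$ with $h \in C[0,1]$ chosen so that $\dim_\text{B} G_h = \alpha - 1$ exists (for instance a suitable Weierstrass-type function); then $\overline{\dim}_\text{B} G_{g_0} = \alpha$, so $g_0 \in C_\alpha[0,1]^2$. The decisive feature is the commutation
\[
H(\psi + t g_0)(x) = H(\psi)(x) + t h(x), \qquad \psi \in C_\alpha[0,1]^2,\ t \in \mathbb{R},
\]
valid because $g_0$ is independent of $y$. The problem then reduces to a one-dimensional sum statement: for every $\phi \in C[0,1]$ arising as the horizon of some $\psi \in C_\alpha[0,1]^2$ and Lebesgue-almost every $t$, the lower box dimension of $\phi + t h$ is at most $\alpha - 1$, so that $\psi + t g_0 \notin L_\alpha$ for a.e. $t$. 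This would be established using the box-dimensions-of-sums lemmas promised in the abstract; the point is that $\dim_\text{B} G_h$ exists and hence provides a sequence of scales at which the oscillations of $\phi + th$ collapse to those of $th$ for almost every $t$.

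For part~(1) the cylinder approach cannot work, because any $g_0(x,y) = h(x)$ with $g_0 \in C_\alpha[0,1]^2$ has $\overline{\dim}_\text{B} G_h \leq \alpha - 1 < 2$ and so $\overline{\dim}_\text{B} G_{H(g_0)} < 2$, preventing $\overline{\dim}_\text{B} G_{H(\psi + t g_0)}$ from reaching $2$ by any rescaling. Instead I would build $g_0$ of lacunary form
\[
g_0(x, y) = \sum_{n=1}^\infty a_n \cos(b_n x)\, \phi_n(y),
\]
with $b_n \to \infty$ lacunarily, each $\phi_n \geq 0$ a smooth bump of height $1$ supported in a narrow interval $I_n \subset [0,1]$ of width $\epsilon_n \to 0$, the $I_n$ pairwise disjoint, and the parameters tuned so that (i) the rapid shrinkage of $\epsilon_n$ prevents a $3$-dimensional contribution and keeps $\overline{\dim}_\text{B} G_{g_0} \leq \alpha$, while (ii) the horizon $H(g_0)$, which aggregates the oscillations $a_n \cos(b_n x)$ across the disjoint strips $I_n$, satisfies $\overline{\dim}_\text{B} G_{H(g_0)} = 2$. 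A sum-of-functions argument would then give, for any $\psi$ and a.e. $t$, that the upper box dimension of $H(\psi + t g_0)$ remains $2$: the lacunarity of $(b_n)$ guarantees a subsequence of scales at which the contribution of $t g_0$ to the horizon dominates and cannot be cancelled by $\psi$ except on a Lebesgue-null set of~$t$.

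The principal obstacle is the geometric construction in part~(1): as the discussion above indicates, horizon of upper box dimension $2$ naturally forces graph dimension $3$ in the surface, which is exactly why no cylinder works, so the lacunary narrow-strip construction must delicately balance amplitudes $a_n$ against widths $\epsilon_n$ so that each oscillatory term $a_n \cos(b_n x)\phi_n(y)$ contributes at most $\delta^{-\alpha + o(1)}$ boxes to $G_{g_0}$ at the relevant scale, while still providing enough horizon oscillation at scale $1/b_n$ to push $\overline{\dim}_\text{B} G_{H(g_0)}$ up to $2$. The supporting a.e.-$t$ sum-of-functions statements used in both parts are presumably the "range of properties of box dimensions of sums of functions" announced in the abstract.
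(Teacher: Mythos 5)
Your overall strategy is not the paper's, and more importantly it has a genuine gap at its root: to show $U_\alpha[0,1]^2$ and $L_\alpha[0,1]^2$ are not prevalent you set out to prove that their \emph{complements} are prevalent, i.e.\ that these sets are shy. That is a strictly stronger statement than the theorem, the paper never establishes it, and your proposed probes in fact fail. For part (2) the key reduction is false: every $g\in C[0,1]$ arises as the horizon of a surface of box dimension $2$ in $C_\alpha[0,1]^2$ (this is exactly the paper's ``modifier'' $M_{g,0}$, with $H(M_{g,0})=g$). Take $\psi=M_{g,0}$ with $\dim_\text{B}G_g=2$ and your cylinder probe $g_0(x,y)=h(x)$, $\dim_\text{B}G_h=\alpha-1$. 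Then $H(\psi+tg_0)=g+th$, and by the paper's Lemma \ref{leq} for $\mathcal{L}^1$-a.e.\ $t$ one has $\underline{\dim}_\text{B}G_{g+th}\ge 2>\alpha-1$, while by Theorem \ref{sum} the surface $\psi+tg_0$ has box dimension $\alpha$ for a.e.\ $t$; so for this $\psi$ almost every translate along your probe lies \emph{inside} $L_\alpha[0,1]^2$, the opposite of what a probe for the complement requires. Thus the statement ``for every horizon $\phi$ of a $C_\alpha$ surface and a.e.\ $t$, $\underline{\dim}_\text{B}G_{\phi+th}\le\alpha-1$'' is simply wrong, and the whole of part (2) collapses. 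For part (1), once $g_0$ is not a cylinder the identity $H(\psi+tg_0)=H(\psi)+tH(g_0)$ is lost, so none of the sum-of-functions lemmas apply to the horizon; your assertion that the lacunary bumps' contribution ``cannot be cancelled by $\psi$ except on a null set of $t$'' would have to hold for \emph{every} $\psi\in C_\alpha[0,1]^2$ simultaneously, and no mechanism is offered (a $\psi$ that is strongly negative on the strips $I_n$ relocates the supremum to regions where $g_0$ vanishes, at least on bounded ranges of $t$). Even if the construction were carried out, you would again be attacking the stronger shyness statement rather than non-prevalence.

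The paper's proof avoids all of this by arguing directly against an arbitrary witness of prevalence. Assuming $U_\alpha$ (resp.\ $L_\alpha$) is prevalent, it takes the Borel measure $\mu$ and compact set $K$ from the definition and exploits $K$: a ``forcer'' $F_{K,y_0}$, built from the compactness of $K$, pins the horizon of $f+F_{K,y_0}$ to the slice $y=y_0$ for every $f\in K$, and ``modifiers'' $M_{g,y_0}$ have surface box dimension $2$ but arbitrary prescribed horizon $g$. For (1), translating by $F_{K,0}+M_{f_1,0}$ and $F_{K,0}+M_{f_2,0}$ with $\overline{\dim}_\text{B}G_{f_1}=2$ and $\overline{\dim}_\text{B}G_{f_2}=1$ produces a single $f_0\in K$ whose two perturbed horizons are $f_0(\cdot,0)+f_1$ and $f_0(\cdot,0)+f_2$; Lemma \ref{upper equals} then forces one of them to have upper box dimension $2$, contradicting membership in $U_\alpha$. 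For (2), the forcer shows that prevalence of $L_\alpha$ would make $\mu$-a.e.\ $f\in K$ have every horizontal slice of upper box dimension $>\alpha-1$, and a Fubini argument combined with the slicing theorem of \cite{boxslice} (a.e.\ slice of a $C_\alpha$ surface has upper box dimension $\le\alpha-1$) yields the contradiction. None of these ingredients appear in your proposal, and the steps you do propose rest on claims that the paper's own lemmas refute.
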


Theorem \ref{tight} shows that we cannot improve Theorem \ref{main} (2) for the box dimensions of the horizon of a prevalent function in $C_\alpha[0,1]^2$.  However, the horizon property does hold for prevalent functions if we consider the subspace $L_\alpha[0,1]^2$ of $C_\alpha[0,1]^2$ consisting of $\alpha$-Lipschitz functions, that is functions for which
\begin{equation} \label{lip}
\text{Lip}_\alpha(f) = \sup_{\substack{x,y \in [0,1]^2 \\ x \neq y}} \frac{\lvert f(x)-f(y) \rvert}{\, \, \,\,\, \, \, \, \, \lvert x-y\rvert^{3-\alpha}} <\infty.
\end{equation}
It is easily verified that 
\[
\|f\|_{\text{\rm{Lip}}_\alpha} = \| f \|_\infty + \text{\rm{Lip}}_\alpha(f)
\]
defines a complete norm on $L_\alpha$, and we write $d_{\text{\rm{Lip}}_\alpha}$ for the corresponding metric.
\\ \\
The Lipschitz condition controls the box dimension of both the surface and the horizon. Thus if $f \in L_\alpha[0,1]^2$ then $\overline{\dim}_\text{B} G_{f} \leq  \alpha$ (though the converse is not true) 
and 
\begin{equation}\label{horlip}
\overline{\dim}_\text{B} G_{H(f)} \leq  \alpha -1,
\end{equation}\label{horlip}
and this enables the following theorem.

\begin{thm}\label{lipthm}
The set
\[
\{f \in L_\alpha[0,1]^2 : \dim_\text{\emph{B}} G_f = \alpha \text{ and }   \dim_\text{\emph{B}} G_{H(f)} = \alpha - 1 \}
\]
is a 1-prevalent subset of $(L_\alpha[0,1]^2, d_{\text{\emph{Lip}}_\alpha})$.
\end{thm}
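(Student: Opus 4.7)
The plan is to exhibit an explicit one-dimensional probe. Every $f \in L_\alpha[0,1]^2$ is $(3-\alpha)$-H\"older, so the horizon $H(f)$ is automatically $(3-\alpha)$-H\"older on $[0,1]$, and the upper bounds $\overline{\dim}_\text{B} G_f \leq \alpha$ and $\overline{\dim}_\text{B} G_{H(f)} \leq \alpha-1$ noted in (\ref{horlip}) hold for every $f$. Thus only the matching lower bounds need be verified on a prevalent set. I would take as probe the span of $g(x,y):=h(x)$, where $h \in C[0,1]$ is a Weierstrass-type function of H\"older exponent $3-\alpha$ with $\dim_\text{B} G_h = \alpha-1$; then $g \in L_\alpha[0,1]^2$, and the crucial structural feature is that, because $g$ is independent of $y$,
\[
H(f + tg)(x) \; = \; \sup_{y \in [0,1]} \bigl( f(x,y) + t h(x) \bigr) \; = \; H(f)(x) + t h(x)
\]
for every $t \in \mathbb{R}$.

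This identity reduces the horizon problem to a one-variable sum-of-functions question, and, together with the slices $x \mapsto f(x,y) + th(x)$ of the surface, it produces two 1D functions of the form ``$(3-\alpha)$-H\"older $+\, t\cdot h$''. The key input is the 1D sum-of-functions result developed earlier in the paper: for any $(3-\alpha)$-H\"older $\psi \in C[0,1]$ and Lebesgue-a.e.\ $t \in \mathbb{R}$, $\dim_\text{B} G_{\psi + t h} = \alpha - 1$. Applied with $\psi = H(f)$ this gives $\underline{\dim}_\text{B} G_{H(f+tg)} \geq \alpha - 1$ for a.e.\ $t$, which with the H\"older upper bound yields $\dim_\text{B} G_{H(f+tg)} = \alpha - 1$. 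Applied to $\psi(\cdot) = f(\cdot, y)$, which is $(3-\alpha)$-H\"older uniformly in $y$, and combined with Fubini on the jointly Borel event, we obtain: for a.e.\ $t$, for a.e.\ $y \in [0,1]$, $\underline{\dim}_\text{B} G_{f(\cdot, y) + th} \geq \alpha - 1$.

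To convert this 1D-slice bound into the desired 2D bound I would use a slab-counting argument. For any continuous $F : [0,1]^2 \to \mathbb{R}$, the oscillation of $F$ on a $\delta \times \delta$ square dominates that of $F(\cdot, y_0)$ on the corresponding $\delta$-interval for each $y_0$ in its $y$-range, so summing the 3D $\delta$-cube count over the $\sim 1/\delta$ horizontal slabs gives $N_\delta(G_F) \geq \sum_j N_\delta(G_{F(\cdot, y_j)})$. Since the set of $y$ with $\underline{\dim}_\text{B} G_{F(\cdot,y)} \geq \alpha-1$ has full Lebesgue measure by the previous paragraph, an Egorov-type uniformization shows that for small $\delta$ a definite positive fraction of the $y_j$'s satisfy $N_\delta(G_{F(\cdot, y_j)}) \gtrsim \delta^{-(\alpha-1)+o(1)}$, whence $N_\delta(G_F) \gtrsim \delta^{-\alpha+o(1)}$ and $\underline{\dim}_\text{B} G_{f + tg} \geq \alpha$; combined with the H\"older upper bound this gives $\dim_\text{B} G_{f+tg} = \alpha$. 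Borel measurability of the target set is routine because $f \mapsto N_\delta(G_f)$ is a Borel function of $f$ in the sup norm. The main obstacle will be the uniformization in the slicing step: the 1D lemma provides an exceptional null $t$-set depending on $\psi$, and Fubini alone gives no control on the scale at which the 1D lower bound ``kicks in'' as $y$ varies, so Egorov (or a suitable uniform quantitative version of the 1D lemma) is needed before summing over the $\sim 1/\delta$ slabs.
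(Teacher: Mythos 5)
Your proposal is correct in outline and uses the same probe as the paper: the one-dimensional subspace spanned by $\Psi(x,y)=h(x)$ with $h$ a $(3-\alpha)$-H\"older function whose graph has box dimension $\alpha-1$, the identity $H(f+t\Psi)=H(f)+th$, the a.e.-$t$ lower bound from the sum-of-functions lemma, and the Lipschitz upper bound (\ref{horlip}) to turn that lower bound into the equality $\dim_\text{B} G_{H(f+t\Psi)}=\alpha-1$. Where you diverge is the surface dimension: you only invoke the one-dimensional sum result and therefore lift the a.e.-slice bound $\underline{\dim}_\text{B} G_{f(\cdot,y)+th}\ge\alpha-1$ to the two-dimensional graph by a Fubini--Egorov slab-counting argument. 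The paper has no need of this: Lemma \ref{leq} and Theorem \ref{sum} are proved for $C[0,1]^d$ for every $d$, so applying them with $d=2$ to $f$ and $\Psi$ (whose graph is the cylinder $G_h\times[0,1]$, of box dimension $\alpha$) gives $\underline{\dim}_\text{B} G_{f+t\Psi}\ge\alpha$ for a.e.\ $t$ in one line, exactly as in Lemma \ref{key}(1); combined with $\overline{\dim}_\text{B} G_{f+t\Psi}\le\alpha$ (valid for every $t$ since $L_\alpha[0,1]^2$ is closed under addition) this finishes the surface part. Your detour can be made to work --- the ``liminf'' uniformization you flag is handled by applying Egorov (or monotone convergence of $\inf_{m\ge n}$ of the dyadic count exponents) to get a set of $y$ of measure $1-\epsilon$ on which the slice counts are uniformly $\gtrsim\delta^{-(\alpha-1-\epsilon)}$ for all small dyadic $\delta$, and all but $\epsilon/\delta$ of the slabs then contribute, with the exceptional $t$-set controlled by Fubini and joint Borel measurability of $(t,y)\mapsto\underline{\dim}_\text{B} G_{f(\cdot,y)+th}$ --- and it has the modest virtue of showing how a.e.-slice lower bounds propagate to the surface, but it costs an extra layer of measurability and uniformization that the paper's direct $d=2$ application of Theorem \ref{sum} avoids entirely. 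The remaining ingredients (Borel measurability of the target set, which follows as in Lemma \ref{bor} because $d_{\text{Lip}_\alpha}$ is stronger than $d_\infty$, and membership of the probe in $L_\alpha[0,1]^2$, which your Weierstrass-type choice of $h$ guarantees) match the paper's argument.
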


Thus a prevalent function in the space of $\alpha$-Lipshitz functions satisfies the horizon property for box dimension.
\\ \\
It would clearly be desirable to obtain analogues of Theorem \ref{main} (2) for \emph{Hausdorff} dimension, $\dim_\text{\H}$.  However, it follows from a category theoretic argument in \cite{graphsums} that the set
\[
H_\alpha[0,1]^2 = \{f \in C[0,1]^2 : \dim_\text{\H} G_f \leq \alpha \}
\]
is not a subspace of $C[0,1]^2$ for $\alpha \in [2, 3)$ because it is not closed under addition (see the remarks at the end of Section 2).  As a consequence, if one were to search for an analogous space to $C_\alpha[0,1]^2$ for Hausdorff dimension one would have to look for a subspace of $C[0,1]^2$ contained in $H_\alpha[0,1]^2$ which would necessarily lie \emph{strictly} inside $H_\alpha[0,1]^2$.

\section{Box dimensions of functions}

The box counting dimensions were defined in (\ref{lbox})--(\ref{ubox}), and in this section we  present various technical results concerning the box dimension of fractal curves and surfaces.  
\\ \\
Let $d \in \mathbb{N}$, let $f \in C[0,1]^d$ and let $S \subseteq [0,1]^d$.  We define the \emph{range} of $f$ on $S$ as
\[
R_f(S) = \sup_{x,y \in S} \lvert f(x)-f(y)\rvert.
\]
For $\delta>0$ let $\Delta_\delta^d$ be the set of grid cubes in the $\delta$-mesh on $[0,1]^d$ defined by
\[
\Delta_\delta^d = \bigcup_{n_1=0}^{\lceil \delta^{-1} \rceil-1} \cdots  \bigcup_{n_d=0}^{\lceil \delta^{-1} \rceil-1} \Big\{[n_1 \delta,(n_1+1)\delta] \times \cdots \times  [n_d \delta,(n_d+1)\delta]\Big\}.
\]
It follows that 
\begin{equation} \label{boxupper}
\delta^{-1}\sum_{S \in \Delta_\delta^d} R_f(S) \leq N_\delta (G_f) \leq 2(\delta^{-1}+1)^d +\delta^{-1}\sum_{S \in \Delta_\delta^d} R_f(S), 
\end{equation}
see \cite{falconer}, so, given a non-constant $f$,
\begin{equation}
N_\delta (G_f) \asymp \delta^{-1}\sum_{S \in \Delta_\delta^d} R_f(S), \label{estimate}
\end{equation}
i.e., there exists constants $\delta_f,C_f>0$ such that for $\delta<\delta_f$
\[
\frac{1}{C_f} \leq \frac{N_\delta (G_f) }{\delta^{-1}\sum_{S \in \Delta_\delta^d} R_f(S)} \leq C_f
\]
The remainder of this section will be devoted to studying the box dimensions of sums of functions.  We will assume throughout that $f+g, f$ and $g$ are all non-constant, as otherwise the proofs are trivial.

\begin{lma} \label{upperbound}
Let $f,g \in C[0,1]^d$.  Then
\[
\overline{\dim}_\text{\emph{B}} G_{f+g} \leq \max\{\overline{\dim}_\text{\emph{B}} G_{f}, \overline{\dim}_\text{\emph{B}} G_{g} \}.
\]
In particular, $C_\alpha[0,1]^d$ is a vector space.
\end{lma}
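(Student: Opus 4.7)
The plan is to reduce everything to the range-sum estimate in (\ref{boxupper})--(\ref{estimate}) and to exploit the fact that ranges behave subadditively under pointwise addition. For any $S \subseteq [0,1]^d$ and any $x,y \in S$ we have $\lvert (f+g)(x) - (f+g)(y) \rvert \leq \lvert f(x)-f(y)\rvert + \lvert g(x)-g(y)\rvert$, so taking suprema gives
\[
R_{f+g}(S) \leq R_f(S) + R_g(S).
\]
Summing this inequality over $S \in \Delta_\delta^d$, multiplying by $\delta^{-1}$, and applying the right-hand inequality of (\ref{boxupper}) to $f+g$ together with the left-hand inequality of (\ref{boxupper}) applied separately to $f$ and $g$, I obtain
\[
N_\delta(G_{f+g}) \leq 2(\delta^{-1}+1)^d + N_\delta(G_f) + N_\delta(G_g).
\]

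Taking logarithms, dividing by $-\log \delta$, and passing to $\limsup$ as $\delta \to 0$, I use the elementary fact that $\log(a_1+a_2+a_3) \leq \log 3 + \max_i \log a_i$ to conclude
\[
\overline{\dim}_\text{B} G_{f+g} \leq \max\bigl\{ d,\ \overline{\dim}_\text{B} G_f,\ \overline{\dim}_\text{B} G_g\bigr\}.
\]
Since the graph of any continuous function on $[0,1]^d$ satisfies $\overline{\dim}_\text{B} G_h \geq d$ (the projection onto $[0,1]^d$ is 1-Lipschitz), the constant $d$ in the maximum is redundant, and this gives the stated inequality.

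For the second assertion, I note that $C_\alpha[0,1]^d$ is nonempty (e.g.\ it contains $0$) and closed under addition by the inequality just proved, since $\alpha \geq d$ means the sum of two functions with $\overline{\dim}_\text{B} G \leq \alpha$ again has $\overline{\dim}_\text{B} G \leq \alpha$. For scalar multiplication by $c \in \mathbb{R}$, if $c=0$ the result is trivial, and otherwise $R_{cf}(S) = \lvert c \rvert R_f(S)$ for every $S$, so $(\ref{estimate})$ gives $N_\delta(G_{cf}) \asymp \lvert c \rvert\, N_\delta(G_f)$ up to the additive error $2(\delta^{-1}+1)^d$, which disappears in the $\limsup$ exactly as above. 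Hence $\overline{\dim}_\text{B} G_{cf} = \overline{\dim}_\text{B} G_f \leq \alpha$, and $C_\alpha[0,1]^d$ is a vector space.

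The only mild subtlety is ensuring that the lower-order term $2(\delta^{-1}+1)^d$ does not inflate the bound; this is handled once and for all by observing that any upper box dimension appearing is already at least $d$. No real obstacle beyond careful bookkeeping of the two inequalities in (\ref{boxupper}) arises.
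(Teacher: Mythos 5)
Your proof is correct and follows essentially the same route as the paper: the key step in both is the subadditivity of the range, $R_{f+g}(S)\leq R_f(S)+R_g(S)$, combined with the box-counting estimate (\ref{boxupper})--(\ref{estimate}). Your bookkeeping runs through $N_\delta$ and disposes of the additive term $2(\delta^{-1}+1)^d$ via $\overline{\dim}_\text{B} G_h \geq d$, while the paper bounds the range sums by $\delta^{1-s-\epsilon}$ directly; this is an immaterial difference, and your explicit treatment of scalar multiplication (left implicit in the paper) is also fine.
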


\begin{proof}
Let $s=\max\{\overline{\dim}_\text{B} G_{f}, \overline{\dim}_\text{B} G_{g} \}$ and let $\epsilon>0$.  By (\ref{estimate}) there exists $\delta_0>0$ such that for all $\delta<\delta_0$ we have
\[
\sum_{S \in \Delta_\delta^d} R_f(S) \leq \delta^{-\overline{\dim}_\text{B} G_{f}-\epsilon+1} \leq \delta^{1-s-\epsilon}
\]
and
\[
\sum_{S \in \Delta_\delta^d} R_g(S) \leq \delta^{-\overline{\dim}_\text{B} G_{g}-\epsilon+1} \leq \delta^{1-s-\epsilon}.
\]
By considering the range of $f+g$ we have
\[
\sum_{S \in \Delta_\delta^d} R_{f+g}(S) \leq \sum_{S \in \Delta_\delta^d} R_{f}(S)+ \sum_{S \in \Delta_\delta^d} R_{g}(S) \leq 2 \delta^{1-s-\epsilon}
\]
so  $\overline{\dim}_\text{B} G_{f+g} \leq s+\epsilon$.  Since this is true for all $\epsilon>0$ we conclude that $\overline{\dim}_\text{B} G_{f+g} \leq s$.
\end{proof}

\begin{lma} \label{upper equals}
Let $f,g \in C[0,1]^d$ and suppose $\overline{\dim}_\text{\emph{B}} G_{f}\neq \overline{\dim}_\text{\emph{B}} G_{g}$.  Then
\[
\overline{\dim}_\text{\emph{B}} G_{f+g} = \max\{\overline{\dim}_\text{\emph{B}} G_{f}, \overline{\dim}_\text{\emph{B}} G_{g} \}.
\]
\end{lma}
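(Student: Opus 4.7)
The plan is to combine the upper bound from Lemma \ref{upperbound} with a symmetry trick. We already know from Lemma \ref{upperbound} that
\[
\overline{\dim}_\text{B} G_{f+g} \leq \max\{\overline{\dim}_\text{B} G_f, \overline{\dim}_\text{B} G_g\},
\]
so only the reverse inequality requires work. Without loss of generality I would assume $s := \overline{\dim}_\text{B} G_f > \overline{\dim}_\text{B} G_g =: t$; the goal then reduces to showing $\overline{\dim}_\text{B} G_{f+g} \geq s$.

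The key observation is that $f = (f+g) + (-g)$, and the graph $G_{-g}$ is the image of $G_g$ under a vertical reflection, so in particular $\overline{\dim}_\text{B} G_{-g} = \overline{\dim}_\text{B} G_g = t$ (this is immediate from the definition via $N_\delta$, as the $\delta$-mesh counts are invariant under reflection). Now I would apply Lemma \ref{upperbound} to the decomposition $f = (f+g) + (-g)$ to obtain
\[
s \;=\; \overline{\dim}_\text{B} G_f \;\leq\; \max\{\overline{\dim}_\text{B} G_{f+g},\, \overline{\dim}_\text{B} G_{-g}\} \;=\; \max\{\overline{\dim}_\text{B} G_{f+g},\, t\}.
\]
Since $s > t$, the maximum on the right cannot be $t$, so it must equal $\overline{\dim}_\text{B} G_{f+g}$, forcing $\overline{\dim}_\text{B} G_{f+g} \geq s$. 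Combined with the upper bound, this gives equality.

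There is essentially no obstacle here; the whole argument is a one-line application of Lemma \ref{upperbound} once one notices the reflection symmetry of box dimension under $g \mapsto -g$. The only tiny point to verify carefully is the non-constancy hypothesis used implicitly in applying (\ref{estimate}): if $f$ and $g$ are non-constant but $f+g$ happens to be constant, then $f = -g$ up to a constant, which would force $\overline{\dim}_\text{B} G_f = \overline{\dim}_\text{B} G_g$, contradicting our hypothesis; so the proof proceeds without issue.
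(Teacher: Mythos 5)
Your proof is correct and is essentially the paper's argument: both hinge on writing the function of larger upper box dimension as $(f+g)$ plus the negation of the other and applying Lemma \ref{upperbound} (the paper phrases this as a contradiction, you argue directly, which is a cosmetic difference). Your remark about the case where $f+g$ is constant is a sensible check that the paper handles by a blanket non-constancy assumption.
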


\begin{proof}
Let $f,g \in C[0,1]^d$ and suppose, without loss of generality, that $\overline{\dim}_\text{B} G_{f}< \overline{\dim}_\text{B} G_{g}$.  If $f+g = h$ where $\overline{\dim}_\text{B} G_{h} \neq \overline{\dim}_\text{B} G_{g}$, Lemma \ref{upperbound} gives that $\overline{\dim}_\text{B} G_{h} < {\dim}_\text{B} G_{g}$. This contradicts Lemma \ref{upperbound} since
\[
\overline{\dim}_\text{B} G_{h-f}=\overline{\dim}_\text{B} G_{g} > \max\{\overline{\dim}_\text{B} G_{h}, \overline{\dim}_\text{B} G_{-f} \}.
\]
\end{proof}

\begin{lma} \label{but 2}
Let $f,g \in C[0,1]^d$. Then
\[
\overline{\dim}_\text{\emph{B}} G_{f+\lambda g} = \max\{\overline{\dim}_\text{\emph{B}} G_{f}, \overline{\dim}_\text{\emph{B}} G_{g} \}
\]
for all $\lambda \in \mathbb{R}$ with the possible exceptions of $\lambda = 0$ and one other value of $\lambda$.
\end{lma}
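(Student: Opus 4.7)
The plan is to split into two cases according to whether $\overline{\dim}_\text{B} G_f$ and $\overline{\dim}_\text{B} G_g$ agree. Before starting the case analysis, I would record the elementary observation that for any $\lambda \neq 0$,
\[
\overline{\dim}_\text{B} G_{\lambda g} = \overline{\dim}_\text{B} G_g,
\]
since the map $(x,y) \mapsto (x,\lambda y)$ is a bi-Lipschitz bijection between $G_g$ and $G_{\lambda g}$ (alternatively, $R_{\lambda g}(S) = |\lambda|R_g(S)$, so the claim follows directly from (\ref{estimate})).

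In the first case, $\overline{\dim}_\text{B} G_f \neq \overline{\dim}_\text{B} G_g$. Then for every $\lambda \neq 0$ we also have $\overline{\dim}_\text{B} G_f \neq \overline{\dim}_\text{B} G_{\lambda g}$, so Lemma \ref{upper equals} applied to $f$ and $\lambda g$ immediately gives $\overline{\dim}_\text{B} G_{f+\lambda g} = \max\{\overline{\dim}_\text{B} G_f, \overline{\dim}_\text{B} G_g\}$. In this case the only possible exception is $\lambda = 0$, which is permitted.

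In the second case, $\overline{\dim}_\text{B} G_f = \overline{\dim}_\text{B} G_g = s$. Lemma \ref{upperbound} gives $\overline{\dim}_\text{B} G_{f+\lambda g} \leq s$ for every $\lambda$, and trivially there is equality at $\lambda = 0$. The heart of the proof is to show that at most one nonzero $\lambda$ can produce strict inequality. I would argue by contradiction: suppose there were two distinct (necessarily nonzero) values $\lambda_1, \lambda_2$ with $\overline{\dim}_\text{B} G_{f+\lambda_i g} < s$. Writing
\[
(\lambda_1 - \lambda_2)\, g \; = \; (f + \lambda_1 g) \, + \, \bigl(-(f + \lambda_2 g)\bigr),
\]
Lemma \ref{upperbound} together with the opening observation would force
\[
\overline{\dim}_\text{B} G_g \; = \; \overline{\dim}_\text{B} G_{(\lambda_1-\lambda_2) g} \; \leq \; \max\{\overline{\dim}_\text{B} G_{f+\lambda_1 g},\, \overline{\dim}_\text{B} G_{f+\lambda_2 g}\} \; < \; s,
\]
contradicting $\overline{\dim}_\text{B} G_g = s$. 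Hence at most one value of $\lambda$ fails the stated equality in this case.

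Combining the two cases yields the lemma with the two permitted exceptional values $\lambda = 0$ and one other. There is no real obstacle here beyond spotting the subtraction trick in the equal-dimension case, which turns Lemma \ref{upperbound} into a rigidity statement by taking differences of the form $(f + \lambda_1 g) - (f + \lambda_2 g) = (\lambda_1-\lambda_2) g$; the rest is a routine application of the two preceding lemmas.
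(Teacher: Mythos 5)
Your proof is correct and rests on the same mechanism as the paper's: the identity relating $f+\lambda_1 g$ and $f+\lambda_2 g$ through the nonzero multiple $(\lambda_1-\lambda_2)g$, combined with Lemmas \ref{upperbound} and \ref{upper equals} and the scaling invariance $\overline{\dim}_\text{B} G_{\lambda g}=\overline{\dim}_\text{B} G_g$. The paper merely organises it differently (assuming one exceptional $\lambda$ with $h=f+\lambda g$ and writing $f+\beta g=h+(\beta-\lambda)g$ to conclude via Lemma \ref{upper equals}), whereas you split into cases and argue by contradiction with two exceptional values; the content is the same.
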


\begin{proof}
Let $f, g \in C[0,1]^d$ and assume without loss of generality that
\[
\max\{\overline{\dim}_\text{B} G_{f}, \overline{\dim}_\text{B} G_{g} \} = \overline{\dim}_\text{B} G_{g} = s.
\]
Suppose $\lambda \in \mathbb{R}$ is such that $f+\lambda g = h$ where $\overline{\dim}_\text{B} G_{h} \neq s$.  It follows from Lemma \ref{upperbound} that $\overline{\dim}_\text{B} G_{h} < s$.  Now let $\beta \in \mathbb{R} \setminus \{0,\lambda \}$. Then $f+\beta g = h+(\beta - \lambda) g$ and since $\overline{\dim}_\text{B} G_{h}<\overline{\dim}_\text{B} G_{g}$ we have by Lemma \ref{upper equals} that 
\[
\overline{\dim}_\text{B} G_{f+\beta g}=\max\{\overline{\dim}_\text{B} G_{h}, \overline{\dim}_\text{B} G_{(\beta-\lambda)g} \} = s.
\]
\end{proof}

We write  $\mathcal{L}^1$ for Lebesgue measure on  $\mathbb{R}$.

\begin{lma} \label{leq}
Let $f,g \in C[0,1]^d$.  Then
\[
\underline{\dim}_\text{\emph{B}} G_{f+\lambda g} \geq \max \{ \underline{\dim}_\text{\emph{B}} G_{f}, \underline{\dim}_\text{\emph{B}} G_{g} \}
\]
for $\mathcal{L}^1$-almost all $\lambda \in \mathbb{R}$.
\end{lma}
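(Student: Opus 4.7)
My plan is a Borel--Cantelli argument along a dyadic sequence of scales, driven by a one-line triangle inequality that collapses the relevant quantity into a single affine function of $\lambda$. I will first note a symmetry: writing $f + \lambda g = \lambda(g + \lambda^{-1} f)$ and using that box dimension is preserved by nonzero vertical rescaling, it is enough to prove that $\underline{\dim}_\text{B} G_{f+\lambda g} \geq \underline{\dim}_\text{B} G_g$ for a.e.\ $\lambda$. Applying that result with the roles of $f,g$ swapped and $\mu = 1/\lambda$ in place of $\lambda$ then gives $\underline{\dim}_\text{B} G_{f+\lambda g} \geq \underline{\dim}_\text{B} G_f$ for a.e.\ $\lambda \neq 0$, and combining yields the lemma.

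So fix $M>0$ and set $\sigma = \underline{\dim}_\text{B} G_g$. For each $\delta>0$ and each $S \in \Delta_\delta^d$ I use continuity to pick $x_S, y_S \in S$ with $g(x_S)-g(y_S) = R_g(S)$ and define $a_S = f(x_S)-f(y_S)$, $b_S = R_g(S) \geq 0$. Then $R_{f+\lambda g}(S) \geq |a_S + \lambda b_S|$, and because the signs of the $b_S$ have been aligned, summing and applying the triangle inequality collapses this into a single affine function of $\lambda$:
\[
\sum_{S \in \Delta_\delta^d} R_{f+\lambda g}(S) \;\geq\; \sum_S |a_S + \lambda b_S| \;\geq\; |A_\delta + \lambda B_\delta|,
\]
with $A_\delta = \sum_S a_S$ and $B_\delta = \sum_S R_g(S)$. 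Hence for any $T>0$ the ``bad set'' $\{\lambda \in \mathbb{R} : \sum_S R_{f+\lambda g}(S) < T\}$ is contained in an interval of length at most $2T/B_\delta$.

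A routine Borel--Cantelli argument finishes things off. Fix $\eta>0$; by the definition of $\sigma$ one has $B_\delta \asymp \delta N_\delta(G_g) \gtrsim \delta^{1-\sigma+\eta/2}$ for all sufficiently small $\delta$, so choosing $T_\delta = \delta^{1-\sigma+\eta}$ makes the bad set $E_\delta \subseteq [-M,M]$ satisfy $|E_\delta| \lesssim \delta^{\eta/2}$. Along $\delta_n = 2^{-n}$ these measures are summable, so by Borel--Cantelli almost every $\lambda \in [-M,M]$ lies in only finitely many $E_{\delta_n}$; for such $\lambda$, (\ref{estimate}) gives $N_{\delta_n}(G_{f+\lambda g}) \gtrsim \delta_n^{-(\sigma-\eta)}$ eventually, so $\underline{\dim}_\text{B} G_{f+\lambda g} \geq \sigma - \eta$ along the dyadic sequence. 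Log-continuity of $N_\delta$ in $\delta$ extends the bound from dyadic $\delta$ to all $\delta$, and letting $\eta \downarrow 0$ and $M \uparrow \infty$ through countable sequences finishes the proof. The only moment that will require real thought is the triangle inequality trick: each individual $|a_S + \lambda b_S|$ vanishes for some $\lambda$, so handling the summands separately would force a useless union bound over the $\sim \delta^{-d}$ cubes; aligning the signs of the $b_S$ lets the whole sum be bounded below by a \emph{single} affine function, whose small sub-level set has the clean length $O(T/B_\delta)$ that plugs straight into Borel--Cantelli.
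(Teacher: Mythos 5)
Your proof is correct, and while it shares the paper's overall skeleton (a lower bound on $\sum_{S \in \Delta_\delta^d} R_{f+\lambda g}(S)$ off a small exceptional set of $\lambda$, followed by Borel--Cantelli along $\delta = 2^{-k}$ and the standard passage from dyadic scales to all scales), the mechanism by which you control the exceptional set is genuinely different. The paper works per cube with the oscillation inequality $R_{f+\lambda g}(S) \ge \lvert R_f(S) - \lambda R_g(S)\rvert$ and then \emph{integrates} in $\lambda$, using $\int_E \lvert a - \lambda b\rvert \, d\lambda \ge \tfrac14 \mathcal{L}^1(E)^2 \lvert b\rvert$ together with a Chebyshev-type comparison to force $\mathcal{L}^1(E_\delta^n) \le 4\delta^\epsilon$; no extremal points are chosen and the bad set is only bounded in measure. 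You instead select, in each cube, points realising $R_g(S)$ with the increments' signs aligned, so that the triangle inequality collapses the whole cube sum into the single affine function $\lvert A_\delta + \lambda B_\delta\rvert$, whose sublevel set is literally an interval of length $2T/B_\delta$; this gives the same measure bound pointwise and deterministically, with no averaging in $\lambda$. A second, smaller difference: the paper obtains the maximum by assuming without loss of generality that $\underline{\dim}_\text{B} G_g$ is the larger of the two, whereas you prove only the bound by $\underline{\dim}_\text{B} G_g$ and recover the bound by $\underline{\dim}_\text{B} G_f$ via the rescaling $f + \lambda g = \lambda(g + \lambda^{-1}f)$, invariance of box dimension under the bi-Lipschitz map $(x,z)\mapsto(x,\lambda z)$, and the fact that inversion preserves null sets -- correct, just slightly more roundabout than the WLOG. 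Each route has its merits: the paper's integration argument needs nothing beyond the oscillations $R_f(S), R_g(S)$ themselves, while your sign-alignment trick is arguably cleaner, isolating exactly why a union bound over the $\sim \delta^{-d}$ cubes is unnecessary. Do make explicit, as the paper does, the standing assumption that $g$ (and $f+\lambda g$) is non-constant, which guarantees $B_\delta > 0$ and the validity of (\ref{estimate}).
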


\begin{proof}
Let $f,g \in C[0,1]^d$, let $\epsilon>0$ and suppose $\max \{ \underline{\dim}_\text{B} G_{f}, \underline{\dim}_\text{B} G_{g} \} =\underline{\dim}_\text{B} G_{g}=s$.  By (\ref{estimate}) there exists a $\delta_0>0$ such that for $\delta<\delta_0$
\begin{equation}
\sum_{S \in \Delta_\delta^d} R_g(S) \geq \delta^{1-s+\epsilon}. \label{g}
\end{equation}

Let $E \subset \mathbb{R}$ be any bounded Lebesgue measurable set and fix $\delta<\delta_0$.  Note that, since
\[
\int_E \lvert a-\lambda b\rvert\, d\lambda \geq \tfrac{1}{4} \,\mathcal{L}^1(E)^2 \,\lvert b \rvert,
\]
for all $a,b \in  \mathbb{R}$,
\begin{eqnarray}
\int_E \sum_{S \in \Delta_\delta^d} R_{f+\lambda g}(S)\, d \lambda &\geq& \sum_{S \in \Delta_\delta^d} \int_E  \lvert R_{f}(S)-\lambda R_{g}(S) \rvert \,d \lambda \nonumber\\ \nonumber\\
&\geq& \sum_{S \in \Delta_\delta^d}\tfrac{1}{4} \,\mathcal{L}^1(E)^2 \,   R_{g}(S) \nonumber \\ \nonumber\\
&=& \tfrac{1}{4} \,\mathcal{L}^1(E)^2 \,  \sum_{S \in \Delta_\delta^d} R_{g}(S) \nonumber \\ \nonumber\\
&\geq& \tfrac{1}{4} \, \mathcal{L}^1(E)^2 \, \delta^{1-s+\epsilon} \label{second}
\end{eqnarray}
using  (\ref{g}).
Let $n \in \mathbb{N}$ and
\[
E_\delta^n = \Big\{\lambda \in [-n,n]:  \sum_{S \in \Delta_\delta^d} R_{f+\lambda g}(S)  \leq \delta^{1-s+2\epsilon} \Big\}.
\]
By (\ref{second}) we have
\[
 \mathcal{L}^1(E_\delta^n) \delta^{1-s+2\epsilon}\,\geq\, \int_{E_\delta^n}\sum_{S \in \Delta_\delta^d} R_{f+\lambda g}(S) \,d \lambda \, \geq \, \tfrac{1}{4} \, \mathcal{L}^1(E_\delta^n)^2 \, \delta^{1-s+\epsilon}
\]
so  $\mathcal{L}^1(E_\delta^n) \leq 4 \delta^\epsilon.$  Choose $K \in \mathbb{N}$ such that $2^{-k}<\delta_0$ for all $k\geq K$.  Then
\[
\sum_{k \geq K} \mathcal{L}^1(E_{2^{-k}}^n) \leq 4 \sum_{k \geq K}  2^{-k\epsilon} < \infty
\]
so by the Borel-Cantelli Lemma,
\[
\mathcal{L}^1 \Big(\bigcap_{M \in \mathbb{N} } \bigcup_{ k \geq M }E_{2^{-k}}^n \Big) =0,
\]
i.e., for $\mathcal{L}^1$-almost all $\lambda \in [-n,n]$, $\lambda \notin E_{2^{-k}}$ for sufficiently large $k$.  It follows that for $\mathcal{L}^1$-almost all $\lambda \in [-n,n]$ there exists $\delta_\lambda>0$ such that for $\delta<\delta_\lambda$
\[
\sum_{S \in \Delta_\delta^d} R_{f+\lambda g}(S)  > \delta^{1-s+2\epsilon}
\]
and hence
\[
\underline{\dim}_\text{B} G_{f+\lambda g} \geq s -2\epsilon.
\]
Since this is true for arbitrarily small $\epsilon$ and since
$\mathbb{R} = \cup_{n \in \mathbb{N}} [-n,n]
$
the result follows.
\end{proof}

Thus we have proved the following theorem.

\begin{thm} \label{sum}
Let $f,g \in C[0,1]^d$.  Then
\[
\max \{ \underline{\dim}_\text{\emph{B}} G_{f}, \underline{\dim}_\text{\emph{B}} G_{g} \} \, \leq \,  \underline{\dim}_\text{\emph{B}} G_{f+\lambda g} \, \leq \,  \overline{\dim}_\text{\emph{B}} G_{f+\lambda g} \, = \, \max \{ \overline{\dim}_\text{\emph{B}} G_{f}, \overline{\dim}_\text{\emph{B}} G_{g} \}
\]
for $\mathcal{L}^1$-almost all $\lambda \in \mathbb{R}$.
\end{thm}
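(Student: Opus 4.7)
The plan is to assemble the theorem directly from Lemmas \ref{upperbound}, \ref{but 2}, and \ref{leq}; no new estimates are needed. The middle inequality $\underline{\dim}_\text{B} G_{f+\lambda g} \leq \overline{\dim}_\text{B} G_{f+\lambda g}$ is immediate from the definitions (\ref{lbox})--(\ref{ubox}) and holds for every $\lambda$, so the only content is establishing the outer equality on the right and the outer inequality on the left, each for $\mathcal{L}^1$-almost all $\lambda$.

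For the upper equality, I would invoke Lemma \ref{but 2} applied to the pair $(f,g)$, which already gives
\[
\overline{\dim}_\text{B} G_{f+\lambda g} \;=\; \max\{\overline{\dim}_\text{B} G_f,\,\overline{\dim}_\text{B} G_g\}
\]
for all $\lambda \in \mathbb{R}$ except for at most two exceptional values (namely $\lambda=0$ and possibly one other). A two-point set is $\mathcal{L}^1$-null, so this equality holds almost everywhere. For the lower bound, Lemma \ref{leq} applied to the same pair delivers
\[
\underline{\dim}_\text{B} G_{f+\lambda g} \;\geq\; \max\{\underline{\dim}_\text{B} G_f,\,\underline{\dim}_\text{B} G_g\}
\]
on a set of full $\mathcal{L}^1$-measure.

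Finally, I would observe that the intersection of two sets of full $\mathcal{L}^1$-measure in $\mathbb{R}$ is again of full measure, so there is a common set of full measure on which both the upper equality and the lower inequality hold simultaneously, yielding the full chain in the statement. There is no genuine obstacle here; the real work was done in the preceding lemmas, and what remains is only to notice that a finite exceptional set is null and that the intersection of two conull sets is conull.
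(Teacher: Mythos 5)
Your proposal is correct and matches the paper's argument, which likewise obtains the theorem by combining Lemma \ref{but 2} (the at-most-two exceptional values form a null set) with Lemma \ref{leq} and intersecting the two conull sets. Nothing further is needed.
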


\begin{proof}
This combines Lemma \ref{but 2} and Lemma \ref{leq}.
\end{proof}

It would clearly be desirable to have analogous results for the Hausdorff dimension and packing dimension of the graphs of sums of functions.  However this is not possible for Hausdorff dimension.  In particular, the estimate
\begin{equation} \label{question}
\dim_\text{H} G_{f+g} \leq \max\{\dim_\text{H} G_{f}, \dim_\text{H} G_{g} \}
\end{equation}
fails; indeed every $f \in C[0,1]$ can be written as the sum of two functions with graphs Hausdorff dimension 1.  Mauldin and Williams  \cite{graphsums} showed this by an elegant application of the Baire category theorem.  It is well known that the set $A=\{f \in C[0,1] : \dim_\text{H} G_f = 1 \}$ is co-meagre and thus, for any $f \in C[0,1]$, the set $A \cap (A+f)$ is co-meagre and in particular non-empty.  Hence, we may choose $g = f_1=f_2+f$ where both $f_1,f_2 \in A$.  Thus if $f \notin A$ then $f=f_1-f_2$ satisfies
\[
\dim_\text{H} G_{f} = \dim_\text{H} G_{f_1-f_2} > \max\{\dim_\text{H} G_{f_1}, \dim_\text{H} G_{-f_2} \} = 1.
\]
It was shown in \cite{bairefunctions} that the set of functions with lower box dimension equal to 1 is also co-meagre  so the above argument could be modified to obtain a slightly more general result concerning lower box dimension.  In particular, every $f \in C[0,1]$ has a decomposition $f=f_1+f_2$ where $f_1, f_2 \in C[0,1]$ have lower box dimension equal to 1.  Such a decomposition has been constructed explicitly in \cite{decomp}.
\\ \\
We are unaware if the analogue of (\ref{question}) holds for packing dimension, so we ask:
Is it true that for all $f, g \in C[0,1]$ we have
\begin{equation} \label{packing}
\dim_\text{P} G_{f+g} \leq \max\{\dim_\text{P} G_{f}, \dim_\text{P} G_{g} \}?
\end{equation}

\section{The space $(C_\alpha[0,1]^d, d_{\alpha,d})$} \label{space}

To consider prevalent subsets of 
\[
C_\alpha[0,1]^d = \{f \in C[0,1]^d : \overline{\dim}_\text{B} G_f \leq \alpha \} \qquad \text{ for } \alpha \in [d,d+1]
\]
 we need to show that $C_\alpha[0,1]^d$ is a completely metrizable topological vector space.  It follows from Lemma \ref{upperbound} that $C_\alpha[0,1]^d$ is  a vector space and in this section we will construct a suitable metric.
\\ \\
For $\alpha \geq d$ define
\[
V_\alpha[0,1]^d = \{f \in C[0,1]^d : \| f \|_{\alpha,d} < \infty \}
\]
where
\[
\| f \|_{\alpha,d} = \| f \|_\infty + \sup_{0<\delta \leq 1} \frac{\sum_{S \in \Delta_\delta^d} R_{f}(S) }{\delta^{1-\alpha}}.
\]
It is easy to see that $(V_\alpha[0,1]^d, \| \cdot \|_{\alpha,d})$ is a normed space.  (See \cite{massopust} for the relationship between these spaces and Besov spaces.) 
\begin{lma} \label{comp}
Let $\alpha \in [d,d+1]$.  Then $(V_\alpha[0,1]^d, \| \cdot \|_{\alpha,d})$ is a complete normed space.
\end{lma}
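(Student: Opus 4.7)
The plan is to proceed exactly as for standard function-space completeness proofs (Sobolev, Hölder, etc.): extract a uniform limit from a Cauchy sequence, then transfer the relevant seminorm control to the limit by passing to the limit inside each fixed-$\delta$ quantity and finally taking the supremum.

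Let $(f_n)$ be Cauchy in $(V_\alpha[0,1]^d, \|\cdot\|_{\alpha,d})$. First I would observe that since $\|\cdot\|_\infty \le \|\cdot\|_{\alpha,d}$, the sequence is Cauchy with respect to the sup norm, so by completeness of $(C[0,1]^d,\|\cdot\|_\infty)$ there exists $f\in C[0,1]^d$ with $f_n\to f$ uniformly. The aim is to show $f\in V_\alpha[0,1]^d$ and $\|f_n-f\|_{\alpha,d}\to 0$.

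The key elementary fact I would use is that for any fixed $S\subseteq [0,1]^d$ and any two continuous functions $u,v$,
\[
|R_u(S)-R_v(S)| \le 2\|u-v\|_\infty,
\]
which follows directly from the definition of $R$. Consequently, for any fixed $\delta>0$ the sum $\sum_{S\in\Delta_\delta^d} R_{f_n}(S)$ (which is a finite sum of at most $(\lceil\delta^{-1}\rceil)^d$ terms) is continuous as a function of $f_n$ in the sup norm, and so converges to $\sum_{S\in\Delta_\delta^d}R_f(S)$ as $n\to\infty$. The same observation applies to $R_{f_n-f_m}(S)\to R_{f_n-f}(S)$ as $m\to\infty$ for every fixed $n$.

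Now I would show $f\in V_\alpha[0,1]^d$. Since $(f_n)$ is Cauchy it is bounded, so $M:=\sup_n\|f_n\|_{\alpha,d}<\infty$. For each fixed $\delta\in(0,1]$,
\[
\frac{\sum_{S\in\Delta_\delta^d}R_f(S)}{\delta^{1-\alpha}} = \lim_{n\to\infty}\frac{\sum_{S\in\Delta_\delta^d}R_{f_n}(S)}{\delta^{1-\alpha}} \le M,
\]
and taking the supremum over $\delta$ gives $\|f\|_{\alpha,d}\le M+\|f\|_\infty<\infty$. Finally, for $\|f_n-f\|_{\alpha,d}\to 0$: given $\epsilon>0$ pick $N$ with $\|f_n-f_m\|_{\alpha,d}<\epsilon$ for all $n,m\ge N$. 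For any fixed $n\ge N$ and any fixed $\delta\in(0,1]$,
\[
\frac{\sum_{S\in\Delta_\delta^d}R_{f_n-f}(S)}{\delta^{1-\alpha}} = \lim_{m\to\infty}\frac{\sum_{S\in\Delta_\delta^d}R_{f_n-f_m}(S)}{\delta^{1-\alpha}} \le \epsilon,
\]
taking $\sup_\delta$ and combining with $\|f_n-f\|_\infty\to 0$ yields $\|f_n-f\|_{\alpha,d}\le 2\epsilon$ for large $n$.

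The only step that requires any care is the interchange of the supremum over $\delta$ with the limit in $n$ or $m$; this is resolved cleanly by always fixing $\delta$ first, passing to the limit in the finite sum, and only then taking $\sup_\delta$. No compactness or uniformity over $\delta$ is needed because the inequalities at each $\delta$ inherit the uniform bound $M$ (respectively $\epsilon$) from the Cauchy hypothesis. Verifying that $\|\cdot\|_{\alpha,d}$ is a norm (triangle inequality for the seminorm part follows from $R_{f+g}(S)\le R_f(S)+R_g(S)$, absolute homogeneity is immediate, and definiteness comes from $\|\cdot\|_\infty$) is routine and I would just note it in passing.
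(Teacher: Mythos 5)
Your proof is correct and follows essentially the same route as the paper's: extract the uniform limit via the $\|\cdot\|_\infty$-part of the norm, then for each fixed $\delta$ pass to the limit in the finite sum $\sum_{S\in\Delta_\delta^d}R_{(\cdot)}(S)$ (the paper phrases this as bounding the supremum over $\delta_0<\delta\le 1$ by $\limsup_n\|f_n\|_{\alpha,d}$ using uniform convergence) and finally take the supremum over $\delta$ to get both $f\in V_\alpha[0,1]^d$ and $\|f_n-f\|_{\alpha,d}\to 0$. Your write-up merely makes explicit the continuity estimate $\lvert R_u(S)-R_v(S)\rvert\le 2\|u-v\|_\infty$ that the paper leaves implicit.
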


\begin{proof}
Let  $(f_n)_n$ be a Cauchy sequence in $(V_\alpha[0,1]^d, \| \cdot \|_{\alpha,d})$. It follows that $(f_n)_n$ is Cauchy  in $\| \cdot \|_\infty$ and so converges uniformly to some $f \in C[0,1]^d$.
By uniform convergence,
\[
\| f \|_\infty + \sup_{\delta_0 <\delta \leq 1} \frac{\sum_{S \in \Delta_\delta^d} R_{f}(S) }{\delta^{1-\alpha}} \leq \limsup_{n \to \infty}\|f_n\|_{\alpha,d},
\]
for all $0<\delta_0<1$, so 
$f \in V_\alpha[0,1]^d$ with  $\|f\|_{\alpha,d} \leq \limsup_{n \to \infty}\|f_n\|_{\alpha,d}$ . In the same way,  for each $m$, 
we see that  $\|f-f_m\|_{\alpha,d} \leq \limsup_{n \to \infty}\|f_n-f_m\|_{\alpha,d}$, so 
$(f_n)_n$ converges to $f$ in $\|\cdot\|_\alpha$.

\end{proof}

\begin{lma} \label{inter}
Let $(X_k, \| \cdot \|_k)_k$ be a decreasing sequence of complete normed vector spaces.  i.e. for all $k \in \mathbb{N}$ we have $X_k \geq X_{k+1}$ and for $x \in X_{k+1}$ we have $\|x\|_{k+1} \geq \|x\|_k$.  Then
\[
\Big(\bigcap_{k \in \mathbb{N}}  X_k , d \Big)
\]
is a complete metric space where the metric $d$ is defined by
\[
d(x,y) = \sum_{k=1}^\infty \min\big\{ 2^{-k}, \|x-y\|_k\big\}.
\]
\end{lma}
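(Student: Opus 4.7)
The plan is to verify first that $d$ is indeed a metric on $\bigcap_k X_k$, and then prove completeness by extracting limits in each $(X_k, \|\cdot\|_k)$ and identifying them as a single element of the intersection.

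For the metric verification, the series defining $d$ converges for every $x,y$ because each term is bounded by $2^{-k}$, so $d(x,y) \leq 1$ always. Symmetry is immediate. For positivity, note that $d(x,y) = 0$ forces $\|x-y\|_k = 0$ for every $k$, in particular $\|x-y\|_1 = 0$, and since $\|\cdot\|_1$ is a genuine norm this yields $x = y$. The triangle inequality reduces to the pointwise estimate $\min\{2^{-k}, a+b\} \leq \min\{2^{-k}, a\} + \min\{2^{-k}, b\}$ for $a,b \geq 0$, applied to $a = \|x-z\|_k$ and $b = \|z-y\|_k$ together with the triangle inequality for $\|\cdot\|_k$, and then summed over $k$.

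For completeness, let $(x_n)_n \subseteq \bigcap_k X_k$ be $d$-Cauchy. The key observation is that for each fixed $k$, the tail bound $\min\{2^{-k}, \|x_n - x_m\|_k\} \leq d(x_n,x_m)$ forces $\|x_n - x_m\|_k < 2^{-k}$ eventually, and hence $(x_n)_n$ is $\|\cdot\|_k$-Cauchy. By completeness of $(X_k, \|\cdot\|_k)$ there is $y_k \in X_k$ with $\|x_n - y_k\|_k \to 0$. The monotonicity hypothesis $\|\cdot\|_{k+1} \geq \|\cdot\|_k$ on $X_{k+1}$ gives $\|x_n - y_{k+1}\|_k \leq \|x_n - y_{k+1}\|_{k+1} \to 0$, so $(x_n)_n$ converges to $y_{k+1}$ also in $X_k$. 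Uniqueness of limits in $(X_k, \|\cdot\|_k)$ then forces $y_{k+1} = y_k$; call the common value $y$, which lies in every $X_k$ and hence in $\bigcap_k X_k$.

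Finally, to see $d(x_n, y) \to 0$, fix $\varepsilon > 0$ and choose $K$ so large that $\sum_{k>K} 2^{-k} < \varepsilon/2$; this handles the tail of the defining series uniformly in $n$. For the head, since $\|x_n - y\|_k \to 0$ for each $k = 1, \dots, K$, there is $N$ such that $\sum_{k=1}^K \|x_n - y\|_k < \varepsilon/2$ for $n \geq N$, and this dominates the corresponding part of $d(x_n,y)$. The main subtlety is the identification of the $y_k$ as a single element of the intersection; everything else is routine. The hypothesis that the norms are monotone increasing (on the nested spaces) is essential here — without it, the limits in different $X_k$ need not agree, and the intersection would not inherit a meaningful limit.
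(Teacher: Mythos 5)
Your proof is correct and follows essentially the same route as the paper's: deduce that a $d$-Cauchy sequence is $\|\cdot\|_k$-Cauchy for each $k$, use completeness of each $X_k$ to obtain limits, identify them via the monotonicity of the norms, and conclude convergence in $d$ by splitting the series. The only difference is that you spell out the metric axioms and the tail estimate, which the paper leaves as ``clear''.
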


\begin{proof}
It is clear that $d$ is defined for every pair $x,y \in \cap_{k \in \mathbb{N}}  X_k$ and that it is a metric.  To show completeness let $(x_n)_n$ be a Cauchy sequence in $(\cap_{k \in \mathbb{N}}  X_k , d )$.  Then for each $k$, $(x_n)_n$ is Cauchy  in $(X_k , \| \cdot \|_k)$.  Since $(X_k , \| \cdot \|_k)$ is complete there exists $x^{(k)} \in X_k$ such that
$
\|x_n-x^{(k)}\|_k \to 0,
$
but since $\|x\|_k \geq \|x\|_{j}$ for $j<k$ we have that
$
\|x_n-x^{(k)}\|_{j} \to 0
$
for all or $j<k$. Thus $x^{(k)}$ is independent of $k$ and we may simply refer to it as $x$.  It follows that $x \in \cap_{k \in \mathbb{N}} X_k$ and
$
\|x_n-x\|_k \to 0
$
for all $k \in \mathbb{N}$, so $d( x_n, x ) \to 0$.
\end{proof}

Note that whilst the metric $d$ is translation invariant,  $d(0,x)$ does not define a norm as it clearly fails the scalar property.

\begin{lma} \label{dec}
Let $\alpha \geq d$.  For all $k \in \mathbb{N}$ and $f \in C[0,1]^d$ we have
\[
\|f \|_{\alpha+\frac{1}{k+1},d} \geq \|f \|_{\alpha+\frac{1}{k},d}
\]
and consequently
\[
V_{\alpha+\frac{1}{k}}[0,1]^d \geq V_{\alpha+\frac{1}{k+1}}[0,1]^d.
\]
\end{lma}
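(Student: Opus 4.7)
My plan is to reduce the norm inequality to a pointwise comparison of the $\alpha$-dependent normalising factor $\delta^{1-\alpha}$ that sits in the denominator of the supremum term in $\|\cdot\|_{\alpha,d}$. First I would note that the $\|f\|_\infty$ summand is independent of $\alpha$, so it cancels out and we are reduced to comparing the quantities
\[
\sup_{0 < \delta \leq 1} \frac{\sum_{S \in \Delta_\delta^d} R_f(S)}{\delta^{1-(\alpha+\frac{1}{k+1})}} \quad \text{and} \quad \sup_{0 < \delta \leq 1} \frac{\sum_{S \in \Delta_\delta^d} R_f(S)}{\delta^{1-(\alpha+\frac{1}{k})}}.
\]

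Next I would compare the two exponents. Since $\tfrac{1}{k+1} < \tfrac{1}{k}$, we have $1 - (\alpha + \tfrac{1}{k+1}) > 1 - (\alpha + \tfrac{1}{k})$, and because $\delta \mapsto \delta^a$ is decreasing in $a$ on $(0,1]$, this yields $\delta^{1-(\alpha+1/(k+1))} \leq \delta^{1-(\alpha+1/k)}$ for every $\delta \in (0,1]$ (with equality only at $\delta=1$). Taking reciprocals reverses the inequality, and since $\sum_{S \in \Delta_\delta^d} R_f(S) \geq 0$, the pointwise inequality
\[
\frac{\sum_{S \in \Delta_\delta^d} R_f(S)}{\delta^{1-(\alpha+\frac{1}{k+1})}} \geq \frac{\sum_{S \in \Delta_\delta^d} R_f(S)}{\delta^{1-(\alpha+\frac{1}{k})}}
\]
holds for each $\delta \in (0,1]$. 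Taking the supremum over $\delta$ and adding $\|f\|_\infty$ to both sides then gives $\|f\|_{\alpha+\frac{1}{k+1},d} \geq \|f\|_{\alpha+\frac{1}{k},d}$, which is the required norm inequality.

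The inclusion $V_{\alpha+\frac{1}{k}}[0,1]^d \supseteq V_{\alpha+\frac{1}{k+1}}[0,1]^d$ then follows immediately: any $f$ with $\|f\|_{\alpha+\frac{1}{k+1},d} < \infty$ automatically has $\|f\|_{\alpha+\frac{1}{k},d} < \infty$ by the just-established inequality. I do not anticipate any real obstacle here; the argument is purely a monotonicity observation, relying only on the elementary fact that negative powers of $\delta \in (0,1]$ grow as the exponent decreases.
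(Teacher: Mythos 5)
Your argument is correct and is essentially the paper's own proof: both reduce the norm inequality to the elementary fact that for $\delta \in (0,1]$ the quantity $\delta^{a}$ is non-increasing in the exponent $a$, so decreasing the exponent $1-\alpha-\tfrac{1}{k+1} < 1-\alpha-\tfrac{1}{k}$ in the denominator can only increase each term, hence the supremum, and the set inclusion follows immediately from finiteness of the larger norm. No gaps.
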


\begin{proof}
Let $\alpha \geq d$, let $k \in \mathbb{N}$ and let $f \in C[0,1]^d$.  Then
\[
\|f \|_{\alpha+\frac{1}{k},d}  = \| f \|_\infty + \sup_{0<\delta \leq 1} \frac{\sum_{S \in \Delta_\delta^d} R_{f}(S) }{\delta^{1-\alpha-\frac{1}{k}}} \leq  \| f \|_\infty + \sup_{0<\delta \leq 1} \frac{\sum_{S \in \Delta_\delta^d} R_{f}(S) }{\delta^{1-\alpha-\frac{1}{k+1}}}=  \|f \|_{\alpha+\frac{1}{k+1},d}.
\]

\end{proof}

\begin{prop} \label{met}
Let $\alpha \in [d,d+1)$. Then
\[
 \{f \in C[0,1]^d : \overline{\dim}_\text{\emph{B}} G_{f} \leq \alpha\} \equiv C_\alpha[0,1]^d = \bigcap_{k \in \mathbb{N}}  V_{\alpha+\frac{1}{k}}[0,1]^d.
\]
Moreover $(C_\alpha[0,1]^d,d_{\alpha,d} )$ is a complete metric space where 
\[
d_{\alpha,d}(f,g) = \sum_{k=1}^\infty \min\big\{ 2^{-k}, \|f-g\|_{\alpha+\frac{1}{k},d}\big\}.
\]
\end{prop}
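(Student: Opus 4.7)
The proof has two parts: the set-theoretic identity $C_\alpha[0,1]^d = \bigcap_{k\in\mathbb{N}} V_{\alpha+1/k}[0,1]^d$, and then the completeness statement, which reduces to a direct application of Lemma \ref{inter}.

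For the set equality the key tool is the two-sided estimate (\ref{estimate}) together with (\ref{boxupper}). For the forward inclusion, suppose $\overline{\dim}_{\text{B}} G_f \leq \alpha$. Fix $k\in\mathbb{N}$. By definition of upper box dimension, there exists $\delta_0\in(0,1)$ such that $N_\delta(G_f)\leq \delta^{-\alpha-1/k}$ for $0<\delta<\delta_0$; combined with the lower estimate in (\ref{boxupper}), this yields $\sum_{S\in\Delta_\delta^d} R_f(S)\leq \delta^{1-\alpha-1/k}$ on that range. On $[\delta_0,1]$ the ratio $\sum_S R_f(S)/\delta^{1-\alpha-1/k}$ is trivially bounded, since $\sum_S R_f(S)\leq 2\|f\|_\infty(\lceil\delta_0^{-1}\rceil+1)^d$ while $\delta^{1-\alpha-1/k}$ is bounded below by a positive constant (using $\alpha+1/k>1$). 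Hence the supremum defining $\|f\|_{\alpha+1/k,d}$ is finite, so $f\in V_{\alpha+1/k}[0,1]^d$ for every $k$. Conversely, if $f\in \bigcap_k V_{\alpha+1/k}[0,1]^d$, then for each $k$ one has $\sum_S R_f(S)\leq \|f\|_{\alpha+1/k,d}\,\delta^{1-\alpha-1/k}$ for all $\delta\in(0,1]$, so the upper estimate in (\ref{boxupper}) gives
\[
N_\delta(G_f)\leq 2(\delta^{-1}+1)^d+\|f\|_{\alpha+1/k,d}\,\delta^{-\alpha-1/k},
\]
which implies $\overline{\dim}_{\text{B}} G_f\leq \alpha+1/k$. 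Letting $k\to\infty$ gives $\overline{\dim}_{\text{B}} G_f\leq\alpha$, i.e.\ $f\in C_\alpha[0,1]^d$.

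For the completeness claim I would apply Lemma \ref{inter} to the chain $X_k=V_{\alpha+1/k}[0,1]^d$ equipped with norms $\|\cdot\|_k=\|\cdot\|_{\alpha+1/k,d}$. The three hypotheses of Lemma \ref{inter} have already been supplied: each $(X_k,\|\cdot\|_k)$ is a complete normed space by Lemma \ref{comp}; the nesting $X_k\supseteq X_{k+1}$ and the pointwise comparison $\|x\|_{k+1}\geq\|x\|_k$ for $x\in X_{k+1}$ are exactly Lemma \ref{dec}. The metric produced by Lemma \ref{inter} is precisely the $d_{\alpha,d}$ given in the statement, so $(C_\alpha[0,1]^d,d_{\alpha,d})$ is a complete metric space.

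The proof is essentially a bookkeeping exercise combining the preceding three lemmas with the fundamental $N_\delta$ vs.\ $\sum R_f$ estimate. The only mildly delicate step is the forward inclusion $C_\alpha\subseteq\bigcap_k V_{\alpha+1/k}$: one must take the supremum in $\|\cdot\|_{\alpha+1/k,d}$ over \emph{all} $\delta\in(0,1]$, not merely over small $\delta$, so one has to treat the bounded-$\delta$ range separately using the uniform boundedness of $f$ and the fact that $\alpha+1/k>1$. Once that is handled, no further obstacles arise.
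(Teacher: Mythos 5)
Your proposal is correct and follows essentially the same route as the paper: the set equality via the estimate (\ref{estimate})/(\ref{boxupper}) in both directions, and completeness by feeding Lemmas \ref{comp} and \ref{dec} into Lemma \ref{inter}. The only difference is that you spell out the bound on the supremum over $\delta\in[\delta_0,1]$ in the forward inclusion, a detail the paper leaves implicit.
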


\begin{proof}
It follows from Lemmas \ref{comp}-\ref{dec}  that
$\big(\bigcap_{k \in \mathbb{N}}  V_{\alpha+\frac{1}{k}}[0,1]^d , d_{\alpha,d} \big)$ is a complete metric space.
\\ \\
 Let $f \in C_\alpha[0,1]^d$ so that $\overline{\dim}_\text{B}G_f \leq \alpha$, and so by (\ref{estimate}), for each $k \in \mathbb{N}$, there exists $\delta_0>0$ such that for $\delta<\delta_0$
\[
\delta^{-1} \sum_{S \in \Delta_\delta^d} R_{f}(S) \leq N_\delta(G_f) \leq \delta^{-\alpha-\frac{1}{k}}.
\]
It follows that
$\|f \|_{\alpha+\frac{1}{k},d} < \infty$
so $f \in V_{\alpha+\frac{1}{k}}[0,1]^d$ for each  $k$.

For the opposite inclusion,   let $f \in \bigcap_{k \in \mathbb{N}}  V_{\alpha+\frac{1}{k}}[0,1]^d$.  Hence, for all $k \in \mathbb{N}$,
\[
\sup_{0<\delta \leq 1} \frac{\sum_{S \in \Delta_\delta^d} R_{f}(S) }{\delta^{1-\alpha-\frac{1}{k}}}< \infty,
\]
and hence for all $\delta \in (0,1]$
\[
\delta^{-1} \sum_{S \in \Delta_\delta^d} R_{f}(S) \leq C_k \delta^{-\alpha-\frac{1}{k}}
\]
where $C_k$ depends only on $f$ and $k$.  Taking logarithms and using (\ref{estimate}),
$\overline{\dim}_\text{B} G_f \leq \alpha+\frac{1}{k}$ for all $k$, so  $\overline{\dim}_\text{B} G_f \leq \alpha$.

\end{proof}

It is easy to see that the vector space operations are continuous with respect to the topology induced by $d_{\alpha,d}$ and therefore $(C_\alpha[0,1]^d, d_{\alpha,d})$ is a completely metrizable topological vector space. Thus we are able to consider prevalent subsets of $C_\alpha[0,1]^d$.


\section{Proofs of Theorems  \ref{01}, \ref{main} and   \ref{lipthm}} \label{proof1}  

In this section we will prove Theorem \ref{main} which provides bounds for the box dimensions of the horizon of a prevalent surface in $C_\alpha[0,1]^2$ and also
the Lipschitz variant, Theorem   \ref{lipthm}.  We will also sketch the proof of Theorem \ref{01}, which is very similar but simpler to the proof of Theorem \ref{main}.  We begin with a technical measurability lemma.
\begin{lma} \label{bor}\hspace{1mm}

\begin{itemize}
\item[(1)] For all $\alpha \in [2,3)$, the set $F_\alpha[0,1]^2$ is a Borel subset of $(C_\alpha[0,1]^2, d_{\alpha,2})$;
\item[(2)] $F_3[0,1]^2$ is a Borel subset of $(C[0,1]^2, d_\infty)$.
\end{itemize}
\end{lma}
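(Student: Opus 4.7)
The plan is to realize each of the four dimensional quantities defining $F_\alpha[0,1]^2$ --- namely $\underline{\dim}_\text{B} G_f$, $\overline{\dim}_\text{B} G_f$, $\underline{\dim}_\text{B} G_{H(f)}$ and $\overline{\dim}_\text{B} G_{H(f)}$ --- as a Borel measurable real-valued function of $f$ on the space $(C[0,1]^2, \|\cdot\|_\infty)$. This will suffice for both parts of the lemma: part (2) is the direct $d_\infty$ statement, and for part (1) note that $\|f-g\|_{\alpha+1/k,2} \geq \|f-g\|_\infty$ for every $k$, so the inclusion $(C_\alpha[0,1]^2, d_{\alpha,2}) \hookrightarrow (C[0,1]^2, d_\infty)$ is continuous and therefore preimages of $d_\infty$-Borel sets are $d_{\alpha,2}$-Borel.

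First I set $a_n(f) = \sum_{S \in \Delta_{2^{-n}}^2} R_f(S)$ for each $n \in \mathbb{N}$. Using the elementary estimate $|R_f(S) - R_g(S)| \leq 2\|f-g\|_\infty$ and the finiteness of the mesh, each $a_n$ is continuous in $\|\cdot\|_\infty$. Combining (\ref{boxupper}), whose constants depend only on the ambient dimension, with the standard fact that box dimension of a bounded set is unchanged when the scale is restricted to the dyadic sequence $\delta_n = 2^{-n}$, one obtains
\begin{equation*}
\overline{\dim}_\text{B} G_f \ = \ \max\Big\{2,\ 1 \, + \, \limsup_{n \to \infty}\frac{\log a_n(f)}{n\log 2}\Big\},
\end{equation*}
together with the corresponding $\liminf$ formula for $\underline{\dim}_\text{B} G_f$. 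Since a countable $\limsup$ or $\liminf$ of continuous (hence Borel) functions is Borel, both $f \mapsto \overline{\dim}_\text{B} G_f$ and $f \mapsto \underline{\dim}_\text{B} G_f$ are Borel.

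For the horizon, I next record that $f \mapsto H(f)$ is $1$-Lipschitz from $(C[0,1]^2,\|\cdot\|_\infty)$ into $(C[0,1],\|\cdot\|_\infty)$: picking $y_0$ realizing the supremum defining $H(f)(x)$ gives $H(f)(x) - H(g)(x) \leq f(x,y_0) - g(x,y_0) \leq \|f-g\|_\infty$, and symmetry finishes the bound. Repeating the preceding step in ambient dimension $d=1$ on the image $H(f) \in C[0,1]$ with meshes $\Delta_{2^{-n}}^1$, and composing with $f \mapsto H(f)$, yields Borel measurability of $f \mapsto \underline{\dim}_\text{B} G_{H(f)}$ and $f \mapsto \overline{\dim}_\text{B} G_{H(f)}$. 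The set $F_\alpha[0,1]^2$ is then the intersection of the four preimages of closed half-lines $\{\overline{\dim}_\text{B} G_f \leq \alpha\}$, $\{\underline{\dim}_\text{B} G_f \geq \alpha\}$, $\{\underline{\dim}_\text{B} G_{H(f)} \geq \alpha - 1\}$ and $\{\overline{\dim}_\text{B} G_{H(f)} \leq 2\}$, and so is Borel in $(C[0,1]^2, d_\infty)$, hence in $(C_\alpha[0,1]^2, d_{\alpha,2})$ by the opening remark.

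The only delicate point I anticipate is justifying the replacement of the continuous $\delta \to 0$ in (\ref{lbox})--(\ref{ubox}) by the dyadic subsequence; this rests on the uniform comparison that for $\delta \in (2^{-n-1}, 2^{-n}]$, $N_\delta(G_f)$ and $N_{2^{-n}}(G_f)$ differ by a multiplicative constant depending only on the ambient dimension, so that $\log N_\delta(G_f)/(-\log\delta) = \log N_{2^{-n}}(G_f)/(n\log 2) + O(1/n)$ uniformly in $f$. Everything else is routine manipulation of Borel operations.
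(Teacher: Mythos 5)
Your proposal is correct and follows essentially the same route as the paper: both arguments reduce the dimension conditions to the dyadic-scale quantities $\sum_{S\in\Delta_{2^{-n}}}R_f(S)$, use their $\|\cdot\|_\infty$-continuity (the paper writes the resulting set directly as countable unions/intersections of $d_\infty$-open sets, you phrase it as Borel measurability of the four dimension functionals), exploit that $H$ is $1$-Lipschitz in the sup norm, and observe that $d_{\alpha,2}$ is stronger than $d_\infty$ so $d_\infty$-Borel sets remain Borel in $(C_\alpha[0,1]^2,d_{\alpha,2})$.
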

\vspace{2mm}
\begin{proof}  We will prove part (1); part (2) is similar.
\\ \\
Let $\alpha \in [2,3)$.  We have
\begin{eqnarray*}
F_\alpha[0,1]^2 
&=& \{f \in C[0,1]^2 : \dim_\text{B} G_f = \alpha\} \cap \{f \in C[0,1]^2 : \alpha-1 \leq \underline{\dim}_\text{B} G_H(f) \leq  \overline{\dim}_\text{B} G_H(f) \leq 2\} \\ \\
&\equiv& F_{1} \cap F_2 ,
\end{eqnarray*}
say.
We will first consider $F_1$.  By (\ref{estimate}) we have
\begin{eqnarray*}
F_1 &=& \bigcap_{\substack{q \in \mathbb{Q}\\ q>0}} \bigcup_{N \in \mathbb{N}} \bigcap_{n \geq N} \Bigg\{ f \in C[0,1]^2 : \,\,\,\,2^{(\alpha-q-1)n} < \sum_{\,\,\,\,\,S \in \Delta_{2^{-n}}^2} R_f(S) \,\,\,\,< 2^{(\alpha+q-1)n}\Bigg\}
\end{eqnarray*}
and it is clear that the set in curly brackets  is open in $d_\infty$ for all $q, N$ and $n$. Since the metric $d_{\alpha,2}$ is stronger than $d_\infty$, this set is also open in   $d_{\alpha,2}$,
so $F_1$ is a Borel subset of $(C_\alpha[0,1]^2, d_{\alpha,2})$.
\\ \\
Similarly for $F_2$,
\begin{eqnarray*}
F_2 &=& \bigcap_{\substack{q \in \mathbb{Q}\\ q>0}} \bigcup_{N \in \mathbb{N}} \bigcap_{n \geq N}  \Bigg\{ f \in C[0,1]^2 : \,\,\,\,2^{(\alpha-1-q-1)n} < \sum_{\,\,\,\,\,S \in \Delta_{2^{-n}}^1} R_{H(f)}(S) \,\,\,\,< 2^{(2+q-1)n}\Bigg\}.
\end{eqnarray*}
If $g \in B_{d_\infty}(f,r)$ then $H(g) \in B_{d_\infty}(H(f),r)$ so the set in curly brackets is open in $d_\infty$ and thus in $d_{\alpha,2}$.  Hence $F_2$ is a Borel subset of $(C_\alpha[0,1]^2, d_{\alpha,2})$, and consequently $F_{\alpha}[0,1]^2$ is Borel.
\end{proof}

We will now turn to the proof of Theorem \ref{main}.  We will begin by constructing a probe.  Let $\alpha \in [2,3]$ and let $\psi_\alpha \in C[0,1]$ be a non-constant function satisfying
\[
\underline{\dim}_\text{B} G_{\psi_\alpha} =\overline{\dim}_\text{B} G_{\psi_\alpha}= \alpha-1.
\]
Define $\Psi_\alpha \in C[0,1]^2$ by
\[
\Psi_\alpha(x,y)=\psi_\alpha(x)
\]
for $(x,y) \in [0,1]^2$.  It is clear that 
\[
\underline{\dim}_\text{B} G_{\Psi_\alpha} =\overline{\dim}_\text{B} G_{\Psi_\alpha} = \dim_\text{B} G_{\psi_\alpha} +1 = \alpha
\]
and hence $\Psi_\alpha \in C_\alpha[0,1]^2$.  Also note that $H(\Psi_\alpha)=\psi_\alpha$.

\begin{lma}  \label{key} Let $\alpha \in [2,3]$ and let $f \in C_\alpha[0,1]^2$.  For $\mathcal{L}^1$-almost all $\lambda \in \mathbb{R}$ we have
\begin{itemize}
\item[(1)] $\dim_\text{\emph{B}} G_{f+\lambda \Psi_\alpha} = \dim_\text{\emph{B}} G_{\Psi_\alpha} = \alpha$;
\item[(2)] $\alpha - 1  \leq  \underline{\dim}_\text{\emph{B}} G_{H(f+\lambda \Psi_\alpha)} \leq \overline{\dim}_\text{\emph{B}} G_{H(f+\lambda \Psi_\alpha)} \leq 2$.
\end{itemize}
\end{lma}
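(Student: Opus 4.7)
The plan is to apply Theorem \ref{sum} directly for part (1), and for part (2) to exploit the fact that $\Psi_\alpha(x,y) = \psi_\alpha(x)$ depends only on the first coordinate, which lets the supremum over $y$ commute with the linear combination.

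For part (1), since $f \in C_\alpha[0,1]^2$ we have $\overline{\dim}_{\text{B}} G_f \leq \alpha$, and by construction $\underline{\dim}_{\text{B}} G_{\Psi_\alpha} = \overline{\dim}_{\text{B}} G_{\Psi_\alpha} = \alpha$. Applying Theorem \ref{sum} to the pair $(f, \Psi_\alpha)$, for $\mathcal{L}^1$-almost all $\lambda \in \mathbb{R}$ we get
\[
\alpha \;=\; \max\{\underline{\dim}_{\text{B}} G_f, \underline{\dim}_{\text{B}} G_{\Psi_\alpha}\} \;\leq\; \underline{\dim}_{\text{B}} G_{f+\lambda \Psi_\alpha} \;\leq\; \overline{\dim}_{\text{B}} G_{f+\lambda \Psi_\alpha} \;=\; \max\{\overline{\dim}_{\text{B}} G_f, \overline{\dim}_{\text{B}} G_{\Psi_\alpha}\} \;=\; \alpha,
\]
which gives $\dim_{\text{B}} G_{f+\lambda \Psi_\alpha} = \alpha$ on a set of full measure, call it $\Lambda_1$.

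For part (2), the upper bound $\overline{\dim}_{\text{B}} G_{H(f+\lambda \Psi_\alpha)} \leq 2$ is immediate since $H(f+\lambda \Psi_\alpha) \in C[0,1]$ has graph lying in $\mathbb{R}^2$. For the lower bound, the key identity is that for each $x \in [0,1]$ and each $\lambda \in \mathbb{R}$,
\[
H(f+\lambda \Psi_\alpha)(x) \;=\; \sup_{y \in [0,1]} \bigl(f(x,y) + \lambda \psi_\alpha(x)\bigr) \;=\; H(f)(x) + \lambda \psi_\alpha(x),
\]
since $\lambda\psi_\alpha(x)$ is independent of $y$ and can be pulled out of the supremum regardless of the sign of $\lambda$. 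Thus $H(f+\lambda \Psi_\alpha) = H(f) + \lambda \psi_\alpha$. Now applying Lemma \ref{leq} to the pair $(H(f), \psi_\alpha) \in C[0,1] \times C[0,1]$ and using $\underline{\dim}_{\text{B}} G_{\psi_\alpha} = \alpha-1$, we obtain a full-measure set $\Lambda_2$ on which
\[
\underline{\dim}_{\text{B}} G_{H(f+\lambda \Psi_\alpha)} \;=\; \underline{\dim}_{\text{B}} G_{H(f)+\lambda \psi_\alpha} \;\geq\; \max\{\underline{\dim}_{\text{B}} G_{H(f)}, \underline{\dim}_{\text{B}} G_{\psi_\alpha}\} \;\geq\; \alpha - 1.
\]

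Finally, intersecting $\Lambda_1 \cap \Lambda_2$ yields a full-measure set of $\lambda$ on which both (1) and (2) hold. No step here is really an obstacle: the only genuine content beyond the sum-rule machinery of Section 2 is the elementary observation that $H$ commutes with translation by a function of the $x$-variable alone, which is precisely why $\Psi_\alpha$ was chosen to be constant in $y$. This design of the probe is what reduces the two-dimensional horizon question to the one-dimensional dimension-of-a-sum estimate already proved in Lemma \ref{leq}.
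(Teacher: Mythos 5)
Your proposal is correct and follows essentially the same route as the paper: part (1) is a direct application of Theorem \ref{sum}, and part (2) rests on the identity $H(f+\lambda\Psi_\alpha)=H(f)+\lambda\psi_\alpha$ (valid because $\lambda\Psi_\alpha$ is constant in $y$) followed by the one-dimensional sum estimate, which is exactly the paper's argument (it cites Theorem \ref{sum} where you cite Lemma \ref{leq}, but only the lower-bound half is used, so this is the same content). Your explicit remark that the pull-out of the supremum works for either sign of $\lambda$ is a welcome clarification of a point the paper leaves implicit.
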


\begin{proof} Let $\alpha \in [2,3]$ and let $f \in C_\alpha[0,1]^2$.\\ \\
(1)  This follows immediately from Theorem \ref{sum}.
\\ \\
(2)  Since $\Psi_\alpha(x,y)$ is independent of $y$,
\begin{eqnarray*}
H(f+\lambda \Psi_\alpha)(x) = \sup_{y \in [0,1]} (f+\lambda \Psi_\alpha)(x,y) &=& \sup_{y \in [0,1]} \Big(f(x,y)+\lambda \Psi_\alpha(x,y) \Big) \\ \\
&=&\sup_{y \in [0,1]} \Big(f(x,y)\Big)+\lambda H(\Psi_\alpha)(x) \\ \\
&=&H(f)(x)+\lambda H(\Psi_\alpha)(x). 
\end{eqnarray*}
Applying Theorem \ref{sum} for $H(f), H(\Psi_\alpha) \in C[0,1]$ gives
\[
\alpha - 1  =  \underline{\dim}_\text{B} G_{H(\Psi_\alpha)} \leq  \underline{\dim}_\text{B} G_{H(f+\lambda \Psi_\alpha)} \leq \overline{\dim}_\text{B} G_{H(f+\lambda \Psi_\alpha)} \leq 2
\]
for $\mathcal{L}^1$-almost all $\lambda \in \mathbb{R}$.
\end{proof}

Let $\alpha \in [2,3]$ and let $P_\alpha =\{\lambda \Psi_\alpha : \lambda \in \mathbb{R} \}\subset C[0,1]^2$.  Define $\pi_\alpha: P_\alpha \to \mathbb{R}$ by
\[
\pi_\alpha(\lambda \Psi_\alpha) = \lambda
\]
and a measure $\mathcal{L}_{P_\alpha}$ on $P_\alpha$ by
\[
\mathcal{L}_{P_\alpha} = \mathcal{L}^1 \circ \pi_\alpha.
\]

\begin{lma}\label{probe1}
 $P_\alpha$ is a probe for $F_\alpha[0,1]^2$.  In particular, for all $f \in C_\alpha[0,1]^2$, 
\[
\mathcal{L}_{P_\alpha} \Big( C_\alpha[0,1]^2 \setminus (F_\alpha[0,1]^2+f) \Big)=0.
\]
\end{lma}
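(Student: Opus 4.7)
The plan is to deduce this lemma directly from Lemma \ref{key} once the definitions are unwound; the analytic work has already been done there.

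Given any $f \in C_\alpha[0,1]^2$, I will first observe that $-f \in C_\alpha[0,1]^2$ since this set is a vector space by Lemma \ref{upperbound}. I will then apply Lemma \ref{key} with $-f$ in place of $f$: parts (1) and (2) together say that for $\mathcal{L}^1$-almost every $\lambda \in \mathbb{R}$, the function $-f + \lambda \Psi_\alpha$ satisfies
\[
\dim_\text{B} G_{-f+\lambda\Psi_\alpha} = \alpha \qquad \text{and} \qquad \alpha - 1 \le \underline{\dim}_\text{B} G_{H(-f+\lambda\Psi_\alpha)} \le \overline{\dim}_\text{B} G_{H(-f+\lambda\Psi_\alpha)} \le 2.
\]
These are precisely the conditions defining membership in $F_\alpha[0,1]^2$, so $\lambda \Psi_\alpha - f \in F_\alpha[0,1]^2$ for $\mathcal{L}^1$-a.e.\ $\lambda \in \mathbb{R}$.

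Next I will translate this to the required measure statement. An element $\lambda \Psi_\alpha \in P_\alpha$ belongs to $F_\alpha[0,1]^2 + f$ exactly when $\lambda \Psi_\alpha - f \in F_\alpha[0,1]^2$, so by the previous step this holds for $\mathcal{L}^1$-almost every $\lambda$. Because $\mathcal{L}_{P_\alpha} = \mathcal{L}^1 \circ \pi_\alpha$ is by construction supported on $P_\alpha$, it follows that
\[
\mathcal{L}_{P_\alpha}\bigl(C_\alpha[0,1]^2 \setminus (F_\alpha[0,1]^2 + f)\bigr) \;=\; \mathcal{L}^1\bigl(\{\lambda \in \mathbb{R} : \lambda \Psi_\alpha - f \notin F_\alpha[0,1]^2\}\bigr) \;=\; 0,
\]
as required.

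The only technical point to mention is measurability: $F_\alpha[0,1]^2$ is Borel in $(C_\alpha[0,1]^2, d_{\alpha,2})$ by Lemma \ref{bor}, and since translation by $f$ is a homeomorphism the set $F_\alpha[0,1]^2 + f$ is Borel, so its pre-image in $\mathbb{R}$ under $\lambda \mapsto \lambda \Psi_\alpha$ is Lebesgue measurable. No genuine obstacle arises: all substantive work has been packaged into Lemma \ref{key}, which in turn rests on the almost-sure sum formula of Theorem \ref{sum} applied both on $[0,1]^2$ (for the surface) and on $[0,1]$ (for the horizon, using the crucial fact that $\Psi_\alpha$ depends only on $x$ so that $H(f + \lambda \Psi_\alpha) = H(f) + \lambda \psi_\alpha$).
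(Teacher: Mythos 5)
Your proof is correct and follows essentially the same route as the paper: the paper likewise unwinds $\mathcal{L}_{P_\alpha}=\mathcal{L}^1\circ\pi_\alpha$ to reduce the claim to showing $\mathcal{L}^1\bigl(\{\lambda\in\mathbb{R}:\lambda\Psi_\alpha-f\notin F_\alpha[0,1]^2\}\bigr)=0$, which is exactly Lemma \ref{key} applied with $-f$ in place of $f$. Your explicit remarks on the sign change and on Borel measurability via Lemma \ref{bor} are harmless additions that the paper leaves implicit.
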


\begin{proof}

Let $f \in C_\alpha[0,1]^2$.  Then
\begin{eqnarray*}
\mathcal{L}_{P_\alpha} \Big( C_\alpha[0,1]^2 \setminus (F_\alpha[0,1]^2+f) \Big) &=& \mathcal{L}_{P_\alpha} \Big(P_\alpha \setminus (F_\alpha[0,1]^2+f) \Big) \\ \\
&=& (\mathcal{L}^1 \circ \pi_\alpha ) \Big(\lambda \Psi_\alpha \in P_\alpha : \lambda \Psi_\alpha - f \notin F_\alpha[0,1]^2 \Big) \\ \\
&=& \mathcal{L}^1  \Big(\lambda \in \mathbb{R} : \lambda \Psi_\alpha - f \notin F_\alpha[0,1]^2 \Big) \\ \\
&=& 0
\end{eqnarray*}
by Lemma \ref{key}.
\end{proof}

Lemma \ref{probe1} combined with Lemma \ref{bor}(1-2) gives Theorem \ref{main}.

\begin{proof}[Proof of Theorem \ref{lipthm}]\quad 
\\
This is very similar to the proof of Theorem  \ref{main}. We note that for $f \in L_\alpha[0,1]^2$, inequality
(\ref{horlip}) allows us to tighten Lemma \ref{key} to the following statement: For $\alpha \in [2,3]$ and $f \in L_\alpha[0,1]^2$, 
$$\underline{\dim}_\text{B} G_{H(f+\lambda \Psi_\alpha)}= \overline{\dim}_\text{B} G_{H(f+\lambda \Psi_\alpha)} =\alpha -1$$
for $\mathcal{L}^1$-almost all $\lambda \in \mathbb{R}$. 
With this equality, the remainder of the proof is virtually identical to that of Theorem  \ref{main}.
\end{proof}

\begin{proof}[Proof of Theorem \ref{01}]\quad 
\\
Again, this is very similar to the proof of Theorem \ref{main}.  For $\alpha \in [d,d+1]$ a one dimensional probe is constructed for $D_\alpha[0,1]^d$ using a function $\Psi_\alpha\in C_\alpha[0,1]^d$ with $\dim_\text{B} G_{\Psi_\alpha} = \alpha$.  It then follows from Theorem \ref{sum} that for all $f \in C_\alpha[0,1]^d$,
\[
\dim_\text{B} G_{f+\lambda \Psi_\alpha} = \dim_\text{B} G_{\Psi_\alpha} = \alpha
\]
for $\mathcal{L}^1$-almost all $\lambda \in \mathbb{R}$.  The rest of the proof is straightforward.
\end{proof}

\section{Proof of Theorem \ref{tight} } \label{proof2}

In this section we will prove Theorem \ref{tight} which shows that the bounds obtained in 
Theorem \ref{main} are as sharp as possible.
\\ \\
The key tools will be certain classes of functions in $C[0,1]^2$ which we will call `forcers' and `modifiers'.  We will first show such functions exist by explicit construction.  We prove, given a compact subset $K \subset C_\alpha[0,1]^2$ and a point $y_0 \in [0,1]$, the existence of a `forcer', $F_{K,y_0} \in C_2[0,1]^2$, which `forces' the horizon of $F_{K,y_0}+f$ to lie along the line $[0,1] \times {y_0}$ for all $f \in K$.  Furthermore, we prove, given $g \in C[0,1]$ and a point $y_0 \in [0,1]$, the existence of a `modifier', $M_{g,y_0} \in C_2[0,1]^2$, such that
\[
H(F_{K,y_0}+M_{g,y_0}) = g.
\]

\begin{lma}[Forcers]
Let $K \subset C_\alpha[0,1]^2$ be a compact set and let $y_0 \in [0,1]$.  There exists $F_{K,y_0} \in C_2[0,1]^2$ such that
\begin{itemize}
\item[(1)] For all $f \in K$
\[
f(x,y_0) + F_{K,y_0}(x,y_0) \geq f(x,y) + F_{K,y_0}(x,y) 
\]
for all $y \in[0,1]$;

\item[(2)] $F_{K,y_0}(x,y_0)=0$ for all $x \in [0,1]$.
\end{itemize}

\end{lma}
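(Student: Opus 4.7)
The plan is to construct $F_{K,y_0}$ as a function depending only on the second coordinate, of the form $F_{K,y_0}(x,y) = -\phi(|y - y_0|)$, where $\phi \colon [0,1] \to [0,\infty)$ is a continuous nondecreasing function with $\phi(0) = 0$, chosen large enough to dominate the $y$-variation of every $f \in K$. Property (2) will be immediate from $\phi(0) = 0$, while property (1) will reduce to the pointwise inequality $\phi(|y - y_0|) \geq f(x,y) - f(x,y_0)$ for every $f \in K$.

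The first step is to exploit the compactness of $K$. Since the metric $d_{\alpha,2}$ dominates $d_\infty$, the set $K$ is also compact in the sup-norm topology, so by the Arzel\`a--Ascoli theorem $K$ is both uniformly bounded (by some $M$) and equicontinuous. I would then define the one-sided modulus
\[
\tau(r) = \sup\bigl\{ f(x,y) - f(x,y_0) : f \in K,\, x \in [0,1],\, y \in [0,1],\, |y - y_0| \leq r \bigr\}.
\]
Then $\tau$ is nondecreasing, bounded above by $2M$, and equicontinuity forces $\tau(r) \to 0$ as $r \to 0^+$.

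Next I would produce a continuous nondecreasing majorant $\phi \geq \tau$ with $\phi(0) = 0$. This is a routine construction: choosing a sequence $\delta_n \downarrow 0$ with $\tau(\delta_n) \leq 1/n$ and interpolating linearly between the points $(0,0)$, $(\delta_n, 2/n)$ for $n \geq 2$, $(\delta_1, 2M+1)$, and $(1, 2M+1)$ yields a suitable $\phi$. Setting $F_{K,y_0}(x,y) = -\phi(|y - y_0|)$, the function is continuous on $[0,1]^2$, identically zero on the line $\{y = y_0\}$, and satisfies
\[
F_{K,y_0}(x,y_0) - F_{K,y_0}(x,y) = \phi(|y - y_0|) \geq \tau(|y - y_0|) \geq f(x,y) - f(x,y_0)
\]
for every $f \in K$, which rearranges to (1).

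Finally, one checks $F_{K,y_0} \in C_2[0,1]^2$. The graph of $F_{K,y_0}$ is the Cartesian product $[0,1] \times G_g$, where $g(y) = -\phi(|y-y_0|)$ is continuous and piecewise monotone on $[0,y_0]$ and $[y_0,1]$, hence of bounded variation, giving $\dim_{\text{B}} G_g = 1$ and $\overline{\dim}_{\text{B}} G_{F_{K,y_0}} = 2$. The only subtle point in the whole argument is guaranteeing a continuous majorant $\phi$ for a possibly irregular modulus $\tau$; once that is secured, the required properties and the box-dimension estimate fall out immediately from the construction.
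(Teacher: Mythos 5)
Your proof is correct, and it arrives at the same kind of forcer as the paper: a function of $y$ alone, vanishing on the line $y=y_0$, monotone on either side of $y_0$ (hence with graph of box dimension $2$, so in $C_2[0,1]^2$), whose negative dominates the increments $f(x,y)-f(x,y_0)$ uniformly over $K$. The difference lies in how compactness is converted into a continuous majorant. The paper maps $K$ into $(C[0,1]^2,\|\cdot\|_\infty)$ via $f\mapsto f(\cdot,\cdot)-f(\cdot,y_0)$ (continuous since $d_{\alpha,2}$ dominates $d_\infty$), uses that the pointwise supremum over the compact image is continuous to get $g(x,y)$ and then $g^*(y)=\sup_x g(x,y)$, and finally takes the running maximum of $g^*$ on each side of $y_0$ as the monotone majorant $F$, setting $F_{K,y_0}(x,y)=-F(y)$; continuity and the value $0$ at $y_0$ come for free, with no interpolation needed. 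You instead pass through Arzel\`a--Ascoli (again using that $d_{\alpha,2}$-compactness gives $d_\infty$-compactness) to get equicontinuity, define the one-sided modulus $\tau(r)$, and build an explicit continuous nondecreasing majorant $\phi$ with $\phi(0)=0$ by piecewise-linear interpolation, setting $F_{K,y_0}(x,y)=-\phi(|y-y_0|)$. Your domination via the radial modulus $\tau(|y-y_0|)$ is coarser than the paper's pointwise-in-$y$ envelope, but entirely sufficient for (1), and all the steps you cite (the bound $\phi\geq\tau$, $\phi(0)=0$, and the dimension count via the product $[0,1]\times G_g$ with $g$ monotone on each side of $y_0$) check out. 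What each approach buys: the paper's is slightly slicker, as continuity of the majorant is automatic and no auxiliary sequence $\delta_n$ is needed; yours is a touch more elementary in that it only invokes the standard Arzel\`a--Ascoli facts rather than the observation that a supremum over a compact family of continuous functions is continuous, at the cost of the (routine but fiddly) construction of $\phi$.
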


\begin{proof}

Define a map 
$\phi:(K,d_{\alpha,2})  \to (C[0,1]^2,\| \cdot \|_\infty)$ by
\[
\phi(f)(x,y) =f(x,y) - f(x,y_0).
\]

It is clear that $\phi$ is continuous and thus $\phi(K)$ is compact. In particular, $g$ defined by $g(x,y)=\sup_{f \in \phi(K)} f(x,y)$ is continuous on $[0,1]^2$, from which it follows that $g^*$ defined by $g^*(y) = \sup_{x \in [0,1]} g(x,y)$ is continuous. Now let $F \in C[0,1]$ be such that
\begin{itemize}

\item[(i)] $\overline{\dim}_\text{B} G_F = 1$;

\item[(ii)]  $F(y) \geq g^*(y)$ for all $y \in [0,1]$;

\item[(iii)]  $F(y_0) = g^*(y_0) = 0$;

\end{itemize}

and define the `forcer' $F_{K, y_0} \in C_2[0,1]^2$ by
\[
F_{K, y_0} (x,y) = -F(y).
\]
It is clear that a function $F$ satisfying (i)--(iii) exists,  
for example,
$$F(y) = \left\{
\begin{array}{ll}
\max_{y \leq w  \leq y_0} g^*(w) & (0\leq y \leq y_0)      \\
\max_{y_0 \leq w  \leq y} g^*(w)  &   (y_0 \leq y \leq 1)  
\end{array}
\right. .
$$
Note that this function is monotonic on either side of $y_0$, and the box dimension of the graph of a monotonic function is 1.
\\ \\
Clearly (i) implies that $F_{K, y_0} \in C_2[0,1]^2$ and (iii) implies that $F_{K, y_0}$ satisfies (2).  
\\ \\
To check (1), let $f \in K$ and let $x,y \in [0,1]$.  Then
$$
f(x,y) - f(x,y_0) \leq g(x,y)
\leq  F(y)
= -F_{K, y_0} (x,y)  
= F_{K, y_0}(x,y_0) - F_{K, y_0} (x,y) 
$$
as required.

\end{proof}

We now construct `modifiers'; the crucial thing is showing we can construct a surface with dimension equal to 2, the least value possible, but with a given horizon which may be highly irregular.

\begin{lma}[Modifiers]
Let $g \in C[0,1]$ and let $y_0 \in [0,1]$.  Then there exists $M_{g,y_0} \in C[0,1]^2$ such that
\begin{itemize}
\item[(1)] $M_{g,y_0}(x, y_0) = g(x)$ for all $x \in[0,1]$;
\item[(2)] $M_{g,y_0}(x, y) \leq M_{g,y_0}(x, y_0)$ for all $y \in [0,1]$;
\item[(3)] $\dim_{\text{\emph{B}}} G_{M_{g,y_0}} = 2$.
\end{itemize}

In particular, for all $g \in C[0,1]$ and all $y_0 \in [0,1]$ we have that $M_{g,y_0} \in C_\alpha[0,1]^2$ for all $\alpha \in [2,3].$
\end{lma}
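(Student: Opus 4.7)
The plan is to build $M_{g,y_0}$ explicitly so that its slice at $y = y_0$ equals $g$, while on $y$-strips away from $y_0$ it is a Lipschitz interpolation of a piecewise linear approximation of $g$; the approximations become finer as the strips shrink toward $y_0$, in a way that keeps the $\delta$-box count of the full graph at $O(\delta^{-2})$ up to a logarithmic factor.

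For each $n \geq 0$ let $g_n \in C[0,1]$ be the piecewise linear interpolant of $g$ at the nodes $\{k/2^n : 0 \leq k \leq 2^n\}$, and set $\epsilon_n = \omega_g(2^{-n})$, where $\omega_g$ is the modulus of continuity of $g$. Then $\|g - g_n\|_\infty \leq \epsilon_n \to 0$, and $g_n$ has Lipschitz constant at most $L_n := 2^n \epsilon_n$. Dyadically subdivide $[y_0, 1]$ and $[0, y_0]$ via $y_n^+ = y_0 + (1-y_0) 2^{-n}$ and $y_n^- = y_0 - y_0 \, 2^{-n}$; on the strip $[0,1] \times [y_{n+1}^+, y_n^+]$ define $M_{g,y_0}(x, y)$ to be the linear interpolation in $y$ between $g_n(x) - \epsilon_n$ at $y = y_n^+$ and $g_{n+1}(x) - \epsilon_{n+1}$ at $y = y_{n+1}^+$ (and symmetrically for the left), and set $M_{g,y_0}(x, y_0) = g(x)$. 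Continuity is automatic across strip boundaries, and at $y_0$ it follows from $g_n \to g$ uniformly with $\epsilon_n \to 0$. Properties (1) and (2) are then immediate: (1) holds by construction, and (2) holds because $g_n(x) - \epsilon_n \leq g(x)$ pointwise, so any convex combination of such values remains $\leq g(x)$.

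For (3), the lower bound $\underline{\dim}_\text{B} G_{M_{g,y_0}} \geq 2$ is automatic, since $G_{M_{g,y_0}}$ projects bijectively onto $[0,1]^2$. For the upper bound, I apply the estimate $N_\delta(G_M) \asymp \delta^{-1} \sum_{S \in \Delta_\delta^2} R_M(S)$ from (\ref{estimate}) and bound the sum strip by strip. On $[0,1] \times [y_{n+1}^+, y_n^+]$ both partial derivatives of $M_{g,y_0}$ are $O(L_n) = O(2^n \epsilon_n)$ in absolute value, so each $\delta$-box $S$ in this strip contributes $R_{M_{g,y_0}}(S) = O(2^n \epsilon_n \delta)$; the strip contains $O(2^{-n}/\delta^2)$ such boxes, giving a contribution of $O(\epsilon_n/\delta)$ per strip. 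Summing over $n = 0, 1, \dots, \lceil \log_2(1/\delta) \rceil$ and using $\epsilon_n \leq \epsilon_0$ produces a total of $O(\log(1/\delta)/\delta)$, while the residual slab $|y - y_0| \leq \delta$ contributes $O(\delta^{-1})$ via the trivial bound $R_{M_{g,y_0}} \leq 2\|M_{g,y_0}\|_\infty$. Hence $N_\delta(G_{M_{g,y_0}}) = O(\log(1/\delta)/\delta^2)$, giving $\overline{\dim}_\text{B} G_{M_{g,y_0}} \leq 2$.

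The main obstacle is (3): the budget $\sum_S R_{M_{g,y_0}}(S) = O(\delta^{-1})$ is tight, and it is essential that the Lipschitz constants $2^n \epsilon_n$ grow in exactly the right proportion to the shrinking strip widths $2^{-n}$, so that the product per strip decays with $\epsilon_n$ and the sum across strips collects only a logarithmic factor. This balance is what allows arbitrarily rough piecewise linear approximations of $g$ to be packed arbitrarily close to the slice $y = y_0$ without pushing the box dimension of the whole graph above $2$.
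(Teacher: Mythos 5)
Your construction is correct and is essentially the paper's own argument: both interpolate, across dyadic horizontal strips accumulating at the line $y=y_0$, between successively finer approximations of $g$ lying below $g$ whose Lipschitz constants grow inversely to the strip widths, and both conclude via a strip-by-strip range estimate giving $N_\delta(G_{M_{g,y_0}})=O(\delta^{-2}\log(1/\delta))$, hence upper box dimension $2$ (the lower bound being automatic). The only differences are cosmetic: you use piecewise-linear interpolants shifted down by $\omega_g(2^{-n})$ to sit below $g$, where the paper uses a monotone Weierstrass sequence with convergence slowed so the Lipschitz constants are at most $2^k$, and you treat general $y_0$ directly rather than reducing to $y_0=0$.
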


\begin{proof}
Let $g \in C[0,1]$ and without loss of generality assume that $y_0 = 0$; the proof can easily be adapted for arbitrary $y_0$.\\ \\
Let $(p_k)_k$ be a sequence of polynomials such that
\begin{itemize}
\item[(i)]  $p_1 \leq p_2 \leq p_3 \leq \dots$;
\item[(ii)] $p_k \nearrow g$;
\item[(iii)] $\lvert p_k(x_1) - p_k(x_2) \rvert \leq 2^k \lvert x_1- x_2 \rvert$  for all $k$ and for all $x_1, x_2 \in [0,1]$.
\end{itemize}

To achieve this, the Weierstrass approximation theorem allows us to chose a sequence satisfying (i) and (ii), and then we may adapt this sequence to get  a very slowly converging sequence, which may involve repeating terms for longer and longer runs, which satisfies (3).
\\ \\
Let 
\[
D_k = [0,1] \times (2^{-k}, 2^{-k+1})
\]
so that we have the decomposition
$
[0,1]^2 = \bigcup_{k=1}^{\infty} \overline{D_k},
$
and let $q:[0,1] \to [0,1]$ be defined by
\[
q(x) = \left\{ \begin{array}{c l}
0  & x =0 \\
2^k x -1   & x \in [2^{-k}, 2^{-k+1}) \\
1 &  x=1
\end{array}.
\right. 
\]
Now define $M_{g,0}: [0,1]^2 \to \mathbb{R}$ by
\[
M_{g,0}(x,y) = \left\{ \begin{array}{c l}
g(x) & y=0 \\ 
q(y) p_{k-1}(x) + (1-q(y))p_{k}(x) &  y \in [2^{-k}, 2^{-k+1}) \text{ for some } k \in \mathbb{N}
\end{array}
\right. 
\]
It is clear that $M_{g,0}$ satisfies conditions (1) and (2) of being a `modifier'.  It remains to show that $\dim_{\text{B}} G_{M_{g,0}} = 2$.
\\ \\
Let $n \in \mathbb{N}$.  By (\ref{boxupper}) we have
\begin{eqnarray}
N_{2^{-n}}(G_{M_{g,0}}) &\leq& 2(2^n+1)^2 +2^{n}\sum_{S \in \Delta_{2^{-n}}^2} R_{M_{g,0}}(S) \nonumber \\ \nonumber \\
&=& 2(2^n+1)^2+2^{n}\sum_{k=1}^{n} \quad \sum_{\substack{S \in \Delta_{2^{-n}}^2\\ \\
S \cap D_k \neq \emptyset}} R_{M_{g,0}}(S)  \quad + \quad 2^{n} \sum_{\substack{S \in \Delta_{2^{-n}}^2\\ \\
\forall k = 1,\dots, n \,: \,S \cap D_k = \emptyset}} R_{M_{g,0}}(S) \nonumber \\ \nonumber \\
&\leq& 2(2^n+1)^2+2^{n}\sum_{k=1}^{n} \quad \sum_{\substack{S \in \Delta_{2^{-n}}^2\\ \\
S \cap D_k \neq \emptyset}} R_{M_{g,0}}(S)  \quad + \quad 2^{n} (2^n+1) 2\|M_{g,0}\|_\infty \nonumber \\ \nonumber \\
&=& 2^{n}\sum_{k=1}^{n} \quad \sum_{\substack{S \in \Delta_{2^{-n}}^2\\ \\
S \cap D_k \neq \emptyset}} R_{M_{g,0}}(S)  \quad + \quad O \Big( (2^{n})^2 \Big) \label{rangeest}
\end{eqnarray}
Let $S=S_x \times S_y \in \Delta_{2^{-n}}^2$ be such that $S \cap D_k \neq \emptyset$.  Then

\begin{eqnarray*}
R_{M_{g,0}}(S) &\leq& R_{q p_{k-1}}(S) + R_{(1-q)p_{k}}(S) \\ \\
&\leq& \Bigg( \sup_{y \in S_y} \sup_{x_1,x_2 \in S_x} \Big\lvert q(y) \big(p_{k-1}(x_1) - p_{k-1}(x_2) \big) \Big\rvert  + \sup_{x \in S_x}  \sup_{y_1,y_2 \in S_y} \Big\lvert p_{k-1}(x) \big(q(y_1) - q(y_2) \big)\Big\rvert \Bigg) \\ \\
&\quad& \quad + \Bigg( \sup_{y \in S_y} \sup_{x_1,x_2 \in S_x} \Big\lvert (1-q(y)) \big(p_{k}(x_1) - p_{k}(x_2) \big) \Big\rvert  \\ \\
&\qquad&  \qquad \qquad \qquad \qquad \qquad + \sup_{x \in S_x}  \sup_{y_1,y_2 \in S_y} \Big\lvert p_{k}(x) \big((1-q(y_1)) -(1- q(y_2)) \big)\Big\rvert \Bigg) \\ \\
&=& \sup_{y \in S_y} q(y) \Bigg( \sup_{x_1,x_2 \in S_x} \Big\lvert  p_{k-1}(x_1) - p_{k-1}(x_2) \Big\rvert \Bigg) + \sup_{x \in S_x}  p_{k-1}(x) \Bigg(\sup_{y_1,y_2 \in S_y} \Big\lvert q(y_1) - q(y_2) \Big\rvert \Bigg)  \\ \\
&\quad& \quad + \sup_{y \in S_y} (1-q(y))  \Bigg( \sup_{x_1,x_2 \in S_x} \Big\lvert  p_{k}(x_1) - p_{k}(x_2) \Big\rvert \Bigg)  +\sup_{x \in S_x}  p_{k}(x)  \Bigg(\sup_{y_1,y_2 \in S_y} \Big\lvert  q(y_2)- q(y_2) \Big\rvert \Bigg) \\ \\
&=&  2^{k-1-n} +  \|g\|_\infty2^{k-n} +  2^{k-n} + \|g\|_\infty 2^{k-n}\\ \\
&=& c \, 2^{k-n}
\end{eqnarray*}
where $c = \frac{3}{2}+2\|g\|_\infty$.  Combining this with (\ref{rangeest}) we obtain
\begin{eqnarray*}
N_{2^{-n}}(G_{M_{g,0}}) &\leq& 2^{n}\sum_{k=1}^{n} \Big\lvert \Big\{ S \in \Delta_{2^{-n}}^2 : S \cap D_k \neq \emptyset \Big\} \Big\rvert \, c \, 2^{k-n} \quad + \quad O \Big( (2^{n})^2 \Big) \\ \\
&=& 2^{n}\sum_{k=1}^{n} 2^n \, 2^{n-k} \, c \, 2^{k-n} \quad + \quad O \Big( (2^{n})^2 \Big) \\ \\
&=& c \, n \, (2^{n})^2  \quad + \quad O \Big( (2^{n})^2 \Big)
\end{eqnarray*}
and letting $n \to \infty$ we deduce that
$
\dim_{\text{B}} G_{M_{g,0}} = 2.
$
\end{proof}

Forcers and modifiers will be used throughout the remainder of this section without being mentioned explicitly.  We may now complete the proof of Theorem \ref{tight}(1).
\\
\begin{proof}[Proof of Theorem \ref{tight} (1)]\quad 

Let $\alpha \in [2,3)$ and assume $U_\alpha[0,1]^2$ is a prevalent subset of $(C_\alpha[0,1]^2,d_{\alpha,2})$.  Hence, there exists a Borel measure $\mu_1$ on $C_\alpha[0,1]^2$ and a compact set $K_1 \subset C_\alpha[0,1]^2$ such that

\begin{equation*}
0 < \mu_1(K_1)<\infty
\end{equation*}
and
\begin{equation} \label{mu1}
\mu_1 \Big(C_\alpha[0,1]^2 \setminus (U_\alpha[0,1]^2+f) \Big) = 0 \text{ for all } f \in C_\alpha[0,1]^2. 
\end{equation}

Let $f_1, f_2 \in C[0,1]$ be functions such that
\begin{equation} \label{opposite}
\overline{\dim}_\text{B}\, G_{ f_1} = 2
\end{equation}
and 
\begin{equation} \label{opposite2}
\overline{\dim}_\text{B}\, G_{ f_2} = 1.
\end{equation}

It follows from (\ref{mu1}) that for $\mu_1$-almost all $f \in C_\alpha[0,1]^2$ we have
\[
f \in \Big(U_\alpha[0,1]^2 - (F_{K_1, 0}+M_{f_1, 0}) \Big) \cap \Big(U_\alpha[0,1]^2 - (F_{K_1, 0}+M_{f_2, 0}) \Big)
\]
and since $\mu_1(K_1)>0$, we can choose $f_0 \in K_1$ such that
\[
f_0 \in \Big(U_\alpha[0,1]^2 - (F_{K_1, 0}+M_{f_1, 0}) \Big) \cap \Big(U_\alpha[0,1]^2 - (F_{K_1, 0}+M_{f_2, 0}) \Big).
\]
Hence, there exist $h_1, h_2 \in U_\alpha[0,1]^2$ such that
\[
f_0 = h_1-(F_{K_1, 0}+M_{f_1,0}) = h_2 - (F_{K_1, 0}+M_{f_2,0}).
\]
It follows that
\[
f_0 + (F_{K_1, 0}+M_{f_1,0}) = h_1 \in U_\alpha[0,1]^2
\]
and
\[
f_0 + (F_{K_1, 0}+M_{f_2,0}) = h_2 \in U_\alpha[0,1]^2.
\]
Let $f_0^* \in C[0,1]$ be defined by $f_0^*(x) = f_0(x,0)$.  We will now consider the horizons of $f_0 + (F_{K_1, 0}+M_{f_1,0})$ and $f_0 + (F_{K_1, 0}+M_{f_2,0})$.  We have
\begin{eqnarray*}
H\Big(f_0 + (F_{K_1, 0}+M_{f_1,0})\Big)(x) &=& \sup_{y \in [0,1]} \Big(f_0(x,y) + (F_{K_1, 0}+M_{f_1,0})(x,y) \Big) \\ \\
&=& f_0(x,0)+(F_{K_1, 0}+M_{f_1,0})(x,0) \qquad \qquad \text{by Lemmas 5.1--5.2}\\ \\
&=& f_0^*(x) + f_1(x)
\end{eqnarray*}

and similarly
\begin{eqnarray*}
H\Big(f_0 +(F_{K_1, 0}+M_{f_2,0})\Big)(x) &=&  f_0^*(x) + f_2(x).
\end{eqnarray*}

Since $f_0 + (F_{K_1, 0}+M_{f_1,0})  \in U_\alpha[0,1]^2$ and $f_0 + (F_{K_1, 0}+M_{f_2,0})  \in U_\alpha[0,1]^2$, it follows that
\begin{equation} \label{bothlessthan2}
\overline{\dim}_\text{B}\, G_{ f_0^*  + f_1}, \, \, \overline{\dim}_\text{B} \,G_{ f_0^* + f_2}  < 2.
\end{equation}
Since $\overline{\dim}_\text{B} G_{f_1} = 2$ it follows from Lemma \ref{upper equals} that $\overline{\dim}_\text{B} G_{f_0^*} = 2$.  It now follows from (\ref{opposite2}) and Lemma \ref{upper equals} that $\overline{\dim}_\text{B} G_{f_0^* + f_2} = 2$ which contradicts (\ref{bothlessthan2}).
\end{proof}
\medskip

We turn to the proof of Theorem  \ref{tight}(2).

\begin{proof}[Proof of Theorem \ref{tight} (2)]\hspace{1mm}

Let $\alpha \in [2,3)$ and assume that $L_\alpha[0,1]^2$ is a prevalent subset of $(C_\alpha[0,1]^2, d_{\alpha,2})$.  Hence, there exists a Borel measure $\mu_2$ on $C_\alpha[0,1]^2$ and a compact set $K_2 \subset C_\alpha[0,1]^2$ such that
$0 < \mu_2(K_2)<\infty$ and
\begin{equation} \label{mu2}
\mu_2\Big(C_\alpha[0,1]^2 \setminus (L_\alpha[0,1]^2+f) \Big) = 0 \text{ for all } f \in C_\alpha[0,1]^2.
\end{equation}

 Let
\[
V=\{(x,0,z): x, z \in \mathbb{R} \} \in G_{3,2}
\]
and for $a \in \mathbb{R}$ let
\[
V_a=V+(0,a,0) =\{(x,a,z): x, z \in \mathbb{R} \}.
\]
\\ 
We claim that for all $y \in [0,1]$, $\mu_2$-almost all $f \in K_2$ satisfy
\begin{equation}
\overline{\dim}_{\text{B}} \, G_f \cap V_y >  \alpha-1. \label{bad}
\end{equation}
To see this, assume there exists $y_0 \in [0,1]$ such that
\[
\mu_2 \Big(f \in K_2 : \overline{\dim}_{\text{B}} G_f \cap V_{y_0} \leq  \alpha-1 \Big) >0;
\]
then
\[
\mu_2 \Big(f \in K_2 : \overline{\dim}_{\text{B}} G_{H(f + F_{K_2,y_0})} \leq  \alpha-1 \Big) >0
\]
whence
\[
\mu_2 \Big(C_\alpha[0,1]^2 \setminus (L_\alpha[0,1]^2 - F_{K_2, y_0}) \Big) >0,
\]
which contradicts (\ref{mu2}).
\\ \\
We also note that, by results in \cite{boxslice}, if
 $f \in C_\alpha[0,1]^2$, then for $\mathcal{L}^1$-almost all $y \in [0,1]$ we have
\begin{equation}
\overline{\dim}_{\text{B}} \, G_{f} \cap V_y \leq  \alpha-1.\label{slice}
\end{equation}
\\
Define a map $\Phi : K_2 \times [0,1] \to [1,2]$ by
\[
\Phi(f,y) = \overline{\dim}_\text{B} \, G_f \cap V_y,
\]
and observe that $\Phi$ is a measurable function.  To see this let $\Phi_1: K_2 \times [0,1] \to C[0,1]$ and $\Phi_2: C[0,1] \to [1,2]$ be
\[
(\Phi_1 (f,y)) (x) = f(x,y)
\]
and
\[
\Phi_2 (f) = \overline{\dim}_\text{B} G_f.
\]
It is clear that $\Phi_1$ is continuous with respect to the metric $d$ on $K_2 \times [0,1]$ given by
\[
d \Big( (f_1, y_1), (f_2,y_2) \Big) = d_{\alpha,2}(f_1,f_2) +\lvert y_1-y_2\rvert
\]
and, since $\mu$ is a Borel measure, it follows that $\Phi_1$ is measurable with respect to any Borel measure on $(C[0,1], \| \cdot \|_\infty)$.  The function $\Phi_2$ is an upper limit of measurable functions and therefore is itself (Borel) measurable.
It follows that the composition $\Phi = \Phi_2 \circ \Phi_1$ is (Borel) measurable.
\\ \\
Consider the following integral.
\begin{eqnarray*}
\int_{K_2} \int_0^1 \Phi(f,y) \, d\mathcal{L}^1 \, d\mu_2 &=&  \int_0^1 \int_{K_2} \Phi(f,y) \, d\mu_2 \, d\mathcal{L}^1 \qquad \qquad \text{by Fubini's Theorem} \\ \\
&>& \int_0^1 (\alpha - 1) \, \mu_2(K_2) \, d\mathcal{L}^1 \qquad \qquad \text{by (\ref{bad})} \\ \\
&=&  (\alpha - 1) \, \mu_2(K_2).
\end{eqnarray*}

It follows that
\[
\mu_2 \Big(f \in K_2 : \int_0^1 \Phi(f,y) \, d\mathcal{L}^1 > \alpha - 1 \Big) > 0
\]
and we can thus choose $f_0 \in K_2$ such that
\[
\int_0^1 \Phi(f_0,y) \, d\mathcal{L}^1 > \alpha - 1.
\]
It follows that
\[
\mathcal{L}^1 \Big(y \in [0,1] : \Phi(f_0,y)  > \alpha - 1 \Big) > 0
\]
but this contradicts (\ref{slice}).
\end{proof}

\begin{centering}

\textbf{Acknowledgements}

\end{centering}

We thank James T. Hyde for suggesting the Fubini type argument used in completing the proof Theorem \ref{tight} (2).


\begin{thebibliography}{1}


\bibitem{randomhorizon}
K.~J. Falconer.
The horizon problem for random surfaces,
{\em Math. Proc. Camb. Phil. Soc.}, {\bf 109}, (1991), 211--219.

\bibitem{falconer}
 K.~J. Falconer.
{\em Fractal Geometry: Mathematical Foundations and Applications},
John Wiley, 2nd Ed., 2003.

\bibitem{boxslice}
K.~J. Falconer and M. J\"arvenp\"a\"a.
Packing dimensions of sections of sets,
{\em Math. Proc. Camb. Phil. Soc.}, {\bf 125}, (1999), 89--104.

\bibitem{brownianhorizon}
K.~J. Falconer and J. L\'evy V\'ehel. Horizons of fractional Brownian surfaces,
{\em Proc. R. Soc. Lond.}, {\bf 456}, (2000), 2153--2178.

\bibitem{lowerprevalent}
V. Gruslys, J. Jonu\v{s}as, V. Mijovi\`c, O. Ng, L. Olsen and I. Petrykiewicz.  Dimensions of prevalent continuous functions, \emph{preprint} (2010).

\bibitem{prevalence1}
B.~R. Hunt, T. Sauer and J.~A. Yorke.  Prevalence:  a translational-invariant ``almost every'' on infinite dimensional spaces, {\em Bull. Amer. Math. Soc. (N.S.)}, {\bf 27}, (1992), 217--238.

\bibitem{bairefunctions}
J.~T. Hyde, V. Laschos, L. Olsen, I. Petrykiewicz and A. Shaw. On the box dimensions of graphs of typical functions, \emph{preprint} (2010). 

\bibitem{massopust}
P. R. Massopust.
 {\em Fractal Functions, Fractal Surfaces, and Wavelets},
Academic Press, 1994.

\bibitem{graphsums}
R. D. Mauldin and S. C. Williams. On the Hausdorff dimension of some graphs, {\em Trans. Amer. Math. Soc.}, {\bf 298}, (1986), 793--803.

\bibitem{mcclure}
M. McClure. The prevalent dimension of graphs, {\em Real Anal. Exchange}, {\bf 23} (1997), 241--246.

 \bibitem{prevalence}
W. Ott and J. A. Yorke.  Prevalence,
 {\em Bull. Amer. Mat. Soc.}, {\bf 42}, (2005), 263--290.

\bibitem{rudin}
W. Rudin.
 {\em Functional Analysis},
McGraw-Hill, 2nd Ed., 1991.

\bibitem{shaw}
A. Shaw. Prevalence, {\em M.Math Dissertation}, (2010).

\bibitem{decomp}
P. Wingren. Dimensions of graphs of functions and lacunary decompositions of spline approximations, {\em Real Anal. Exchange}, {\bf 26}, (2000), 17-26.

\end{thebibliography}
\end{document}